\numberwithin{equation}{section}
\newtheorem{thm}{Theorem}
\newtheorem{lem}{Lemma}
\newtheorem{cor}{Corollary}
\newtheorem{prop}{Proposition}
\newtheorem{rem}{Remark}
\newtheorem{ass}{Assumption}
\definecolor{ForestGreen}{rgb}{0,0.5,0}
\def\B{\color{black}}
\def\Ru{\color{black}}
\def\Rd{\color{black}}
\newcommand{\jac}[1]{\ensuremath{\det(D #1)}}
\newcommand{\z}{\phantom{0}}
\def \M{\mathbf{M}}
\def \D{\mathbf{D}}
\def \P{\boldsymbol{\mathcal{M}}}
\def \PCE{\boldsymbol{\mathcal{M}}_{CE}}
\def \Ndof{N_{\text{dof}}}
\def \PMP{\P^{-\frac{1}{2}}\M\P^{-\frac{1}{2}}}
\def \PMPCE{\P_{CE}^{-\frac{1}{2}}\M\P_{CE}^{-\frac{1}{2}}}
\def \Mp{\widehat{\M}}
\def \Mt{\widetilde{\M}}
\def \vett[#1]{\boldsymbol{#1}}
\def \DMD{\D^{-\frac{1}{2}} \widehat{\D}^{\frac{1}{2}} \M \widehat{\D}^{\frac{1}{2}} \D^{-\frac{1}{2}}}
\def \supp{\mathrm{supp}}
\def \Prs{\D^{\frac{1}{2}} \widehat{\D}^{-\frac{1}{2}} \Mp \widehat{\D}^{-\frac{1}{2}} \D^{\frac{1}{2}}}
\def \spann{\mathrm{span}}
\def \NOmega{N_{\mathrm{patch}}}
\def \Npatch{N_{\mathrm{adj}}}
\def \ptca{r}
\def \ptcb{s}
\def \Prsa{\D^{(\ptca)^{\frac{1}{2}}} \widehat{\D}^{(\ptca)^{-\frac{1}{2}}} \Mp^{(\ptca)} \widehat{\D}^{(\ptca)^{-\frac{1}{2}}} \D^{(\ptca)^{\frac{1}{2}}}}
\def \Prad{\P^{-1}_{\text{ad}}}
\def \PadMPad{\P^{-\frac{1}{2}}_{\text{ad}}\M \P^{-\frac{1}{2}}_{\text{ad}}}
\def \ua{{u^{(\ptca)}}}
\begin{document}

\begin{frontmatter}
	
	\title{Easy and efficient preconditioning of  the Isogeometric mass matrix}
	\author[mat]{Gabriele Loli}\ead{gabriele.loli01@universitadipavia.it}
	\author[mat,imati]{Giancarlo Sangalli} \ead{giancarlo.sangalli@unipv.it}
	\author[imati]{Mattia Tani} \ead{mattia.tani@imati.cnr.it}
	
	\address[mat]{Dipartimento di Matematica ``F. Casorati", Universit\`{a} di Pavia, Via A. Ferrata, 5, 27100 Pavia, Italy.}
	
	\address[imati]{Istituto di Matematica Applicata e Tecnologie Informatiche, ``E. Magenes" del CNR, Via A. Ferrata, 1, 27100 Pavia, Italy.}
\begin{abstract}
	This paper deals with the fast solution of linear systems associated
	with the mass  matrix, in the context of isogeometric analysis. We
        propose a preconditioner that is both efficient and easy to implement, based   on a  diagonal-scaled  Kronecker product of univariate parametric
	mass matrices. Its application is faster than a matrix-vector product involving the mass matrix itself. 
	We prove that the condition number of the preconditioned
        matrix converges to 1 as the mesh size is reduced, that is, the preconditioner is asymptotically equivalent to the exact inverse. 
	Moreover, we give numerical evidence of its good behaviour with respect to the spline degree and the (possibly singular) geometry parametrization.
	We also extend the preconditioner to the multipatch case through an Additive Schwarz method.
\end{abstract}
\begin{keyword}
	Isogeometric Analysis \sep splines \sep mass matrix \sep
        Additive Schwarz method \sep multipatch.
\end{keyword}
\end{frontmatter}

\section{Introduction}

Isogeometric analysis (IGA) proposed in
\cite{Hughes2005} (see also the book \cite{Cottrell2009}), is   a  computational technique  for solving partial
differential equations  that  uses splines, Non-Uniform Rational
B-splines (NURBS) and other possible
{\Ru generalizations,  both} for the parametrization  of  the computational domain, 
as typically done  in computer aided design, and for the
representation of the unknown  field of  the differential problem.  
Many papers have demonstrated the effective advantage
of isogeometric methods  in various frameworks, see for example the recent
special issue \cite{20171} on the topic. 

The focus of this paper is the solution  of the linear systems
associated with the isogeometric Galerkin  mass matrix, for
arbitrary degree and continuity of the spline approximation.  In
particular, we want to cover the case of  high-degree and
high-continuity spline approximation (the so-called isogeometric
\mbox{$k$-refinement}) whose advantages are explored in, e.g.,
\cite{evans2009n,da2011some,sande2019sharp,takacs2016approximation,bressan2018approximation,SANGALLI2018117}.  
Solving the  mass matrix system  is needed, for example:
\begin{itemize}
	\item in explicit dynamic simulation, that is, when an explicit finite difference
	schemes in time is coupled to an isogeometric discretization
	in space, see e.g. \cite{Hartmann2016} 
	\item in PDE-constrained optimization problem \cite{Dolgov2020Stoll}
	\item when the mass matrix is used as a smoother in a multigrid solver  \cite{hofreither2014mass,hofreither2017robust}
	\item when the mass matrix is used as a preconditioner for the Schur complement of the Stokes problem \cite{elman2014finite} 
	\item in general,  when   evaluations of $L^2$-projections are
	needed, for example in  nearly-incompressible elasticity  with the
	$\bar B$--$\bar F$  method \cite{elguedj2008b},   or in the  mortar
	method for multipatch gluing  \cite{BRIVADIS2015292}, or in other applications
	like fast  simulation of tumor evolution \cite{los2017application}.
\end{itemize}

Due to the condition number of the mass matrix, that grows
exponentially with respect to the spline degree, finding efficient solvers 
is not a trivial task unless we are  in the low degree  case.

One of the first  ideas  that have been explored 
is to use collocation instead of a Galerkin formulation, since in this
case the mass matrix (that is, the B-spline collocation matrix)  is
easier to invert (see  \cite{EVANS2018208,AURICCHIO20122} and the
references therein).

If we stay with the Galerkin formulation, the classical strategy of lumping
and then inverting the mass matrix lacks accuracy and, as a
preconditioner for an iterative solver, lacks robustness with respect
to the spline  degree. There are instead ad hoc constructions of sparse and approximated inverse
of the mass matrix, see for example \cite{tkachuk2015direct},  or
biorthogonal bases, see 
\cite{wunderlich2019biorthogonal}, designed with the aim of keeping  accuracy.
Approximated inverses or preconditioners of the mass matrix often use one  key feature of   multivariate
splines: the tensor-product construction. Indeed $\widehat{\mathbf{M}} $, the
Galerkin mass matrix  on the reference patch  $[0,1]^d$,  is a Kronecker matrix of the form
\begin{equation}\label{eq:Kronecker-structure}
\widehat{\mathbf{M}} =  \widehat{\mathbf{M}}_d\otimes\ldots\otimes \widehat{\mathbf{M}}_1 ,
\end{equation}
where the $\widehat{\mathbf{M}}_i $ are unidimensional parametric mass matrices. Inverting
$\widehat{\mathbf{M}}$ above only requires the inversion of the factors
$\widehat{\mathbf{M}}_i $.
However, on a generic patch, due to the geometry mapping, the 
structure above is lost, that is,  the true  isogeometric mass matrix
$\mathbf{M}$ we are interested in  is not a Kronecker matrix like
$\widehat{\mathbf{M}} $. One could use $\widehat{\mathbf{M}}   $  as a
preconditioner for ${\mathbf{M}} $, but, depending on the geometry
parametrization of the patch, the results are not always satisfactory.
Then,  \cite{GAO201419} developed an extension  
of \eqref{eq:Kronecker-structure} that better  approximate
$\mathbf{M}$ and is suitable for a fast application, see also
\cite{wozniak2017parallel} for its parallel implementation. Another
possibility is to seek for a low-rank approximation of $\mathbf{M}$,
that is, approximate $\mathbf{M}$ as a sum of
Kronecker matrices, see
\cite{mantzaflaris2014matrix,mantzaflaris2017low,hofreither2018black}.
The recent paper \cite{CHAN201822} constructs an approximation of $\mathbf{M}^{-1} $ as $\widehat{\mathbf{M}}
^{-1}\mathbf{M}_{\jac{\vett[F]}^{-1}}\widehat{\mathbf{M}} ^{-1}$, where
$\mathbf{M}_{\jac{\vett[F]}^{-1}}$ is a suitable weighted {\Ru mass matrix.}

In our paper,  we also propose and study a preconditioner $\P$  of the Galerkin
mass matrix. The main feature of our approach is that, compared to
previous results,  it is very easy to implement  but also
extremely efficient and robust.  On a single patch, we define $\P$ as $\Prs$,  where
$\mathbf{D} $ and  $\widehat{\mathbf{D}} $ are the diagonal matrices
made with the diagonals of the true mass ${\mathbf{M}} $ and parametric
mass $\widehat{\mathbf{M}} $, respectively. Therefore, we approximate
${\mathbf{M}} $  by the {\Ru Kronecker matrix} $\widehat{\mathbf{M}} $ combined
with a symmetric diagonal scaling. The computational cost of one
application of the preconditioner is then just $O(p\Ndof)$  FLOPS, while
each matrix-vector multiplication with $\mathbf{M} $ requires $ O(p^d
N_{\text{dof}})$ FLOPS, where $p$ is the spline degree and $
N_{\text{dof}}$ is the number of degrees of freedom. For multipatch domains, we combine the
preconditioner above on each patch with an Additive Schwarz method.
We prove the robustness of the preconditioner with respect to the
mesh size and, in the single patch case, we also show that $
 \kappa(\PMP) \rightarrow 1$ when $h \rightarrow 0$.  Our numerical benchmarks show that the
preconditioned problem behaves well also for large $p$ and even in the
case of typical singular parametrizations of the computational domain,
which is a case not covered by the theory.

The rest of this work is organized as follows: Section
\ref{sec:preliminaries} introduces our notation for B-splines and
isogeometric analysis.  In Section \ref{sec:single_patch} we describe
the proposed  preconditioner on a single patch domain and we prove its
\mbox{$h$-robustness}, while in Section \ref{sec:multipatch} we generalize it
to multipatch domains by means of the Additive Schwarz theory.  We
show how to efficiently apply the  preconditioner and analyze its computational cost in Section \ref{sec:application_cost}.
In Section \ref{sec:tests} we report  numerical results assessing the effectiveness of the proposed preconditioner, its good behaviour with
respect to $p$ and in case of singular parametrizations,  and compare with
the approach of \cite{CHAN201822}.
Concluding remarks 
are
wrapped up in Section \ref{sec:conclusion}.

\section{Preliminaries}\label{sec:preliminaries}
\subsection{B-splines}
Given two positive integers $p$ and $m$, consider an open knot vector $$\Xi :=
\{\xi_1,\ldots, \xi_{m+p+1}\}$$ such that
$$
\xi_1 =\ldots=\xi_{p+1} < \xi_{p+2} \le \ldots \le 
\xi_{m} < \xi_{m+1}=\ldots=\xi_{m+p+1},
$$
where interior repeated knots are allowed with maximum multiplicity $p$. Without loss of generality, we
assume $\xi_1 = 0$ and $\xi_{m+p+1} =1 $.
From the knot vector $\Xi$, B-spline functions of degree $p$ are defined following the well-known Cox-De Boor recursive formula: we start with piecewise constants ($p=0$):
\begin{equation*}
 \widehat b_{i,0}(\zeta) = \left \{
\begin{array}{ll}
1 & \text{if } \xi_i \leq \zeta < \xi_{i+1}, \\
0 & \text{otherwise},
\end{array}
\right.
\end{equation*}
and for $p \ge 1$ the B-spline functions are defined by the recursion
\begin{equation*}
 \widehat b_{i,p}(\zeta) = \frac{\zeta - \xi_i}{\xi_{i+p} - \xi_i}  \widehat b_{i,p-1}(\zeta) + \frac{\xi_{i+p+1} - \zeta}{\xi_{i+p+1} - \xi_{i+1}}  \widehat b_{i+1,p-1}(\zeta),
\end{equation*}
where $0/0 = 0$.
Each B-spline $ \widehat b_{i,p}$ depends only on $p+2$ knots, which are collected in the local knot vector
\begin{equation*}
\Xi_{i,p}:= \{ \xi_{i}, \ldots,\, \xi_{i+p+1}\},
\end{equation*}
is non-negative and supported in the interval $[\xi_i , \xi_{i+p+1}]$. Moreover, these \mbox{B-spline} functions constitute a partition of unity, that is
\begin{align} \label{eq:partition_unity}
	\sum_{i=1}^{m} \widehat b_{i,p} (x)=1, & & \forall x \in (0,1).
\end{align}
The univariate spline space is defined as
\begin{equation*}
\widehat{\mathcal{S}}_{h} = \widehat{\mathcal{S}}_{h}([0,1]) : = \mathrm{span}\{\widehat{b}_{i,p}\}_{i = 1}^m,
\end{equation*}
where $h$ denotes the maximal mesh-size.
For brevity, the degree $p$ is not always reported in the notation.
For more details on B-splines properties see   \cite{Cottrell2009,DeBoor2001}.

Multivariate B-splines are defined from univariate B-splines by tensorization.
  Let $d$ be the space dimension and consider open knot vectors \linebreak ${\Xi_k = \{\xi_{k,1}, \ldots,
\xi_{k,m + p + 1} \}}$ and a set of multi-indices \linebreak ${\mathbf{I}:=\{ \mathbf{i}=(i_1,\ldots, i_d): \, 1 \leq i_l \leq m \}}$. For each multi-index  $\mathbf{i}=(i_1,\ldots, i_d)$, we
introduce the $d$-variate B-spline,
\begin{equation*}
  \label{eq:multivariate-B-splines}
   \widehat B_{\mathbf{i}}(\mathbf{\zeta}) :=  \widehat
    b[\Xi_{i_1,p}](\zeta_1) \ldots  \widehat
    b[\Xi_{i_d,p}](\zeta_d).
\end{equation*}
Observe that, for the sake of simplicity, the knot
vectors are assumed to have the same length and the degree is the same
in all directions. 
The support of each multivariate basis function is
\begin{align*}
	Q'_{\mathbf{i}}:=\supp( \widehat B_{\mathbf{i}})=  \prod_{k=1}^d [\xi_{k,i_k} , \xi_{k,i_k+p+1}].	
\end{align*}
For notational convenience, we define the index set for mesh elements \linebreak
${\mathbf{I}_e := \{ (j_1,\ldots, j_d): \, 1 \leq j_l \leq m+p+1 \}}$,  
\begin{align}\label{def:Q}
		Q_{\mathbf{j}} :=  \prod_{k=1}^d [\xi_{k,j_k} , \xi_{k,j_k+1}], & & \mathbf{j} \in \mathbf{I}_e
\end{align}
and
\begin{align}\label{def:index_set_1}
	\mathcal{I}_{\mathbf{j}}:=\{\mathbf{i} \in \mathbf{I}: \mathrm{int}(Q_{\mathbf{j}} \cap Q'_{\mathbf{i}})\neq \emptyset\}.
\end{align}
The  corresponding spline space  is defined as
\begin{equation*}
\widehat{\mathcal{S}}_{h} =\widehat{\mathcal{S}}_{h}([0,1]^d)  := \mathrm{span}\left\{B_{\mathbf{i}} : \, \mathbf{i} \in \mathbf{I} \right\},
\end{equation*} 
where $h$ is the maximal mesh-size in all knot vectors, that is
\begin{equation*}
	h:= \max_{\substack{1 \leq k \leq d \\ 1 \leq i \leq m+p+1 }}\{ | \xi_{k,i+1} - \xi_{k,i} |\}.
\end{equation*}
\begin{ass}\label{ass:quasi_uniform_mesh}
	We assume that the knot vectors are quasi-uniform, that is, there exists  $\alpha > 0$, independent
	of $h$, such that each nonempty knot span $( \xi_{k,i} ,
	\xi_{k,i+1})$ fulfils $ \alpha h \leq \xi_{k,i+1} - \xi_{k,i}$, for $1\leq k \leq d$.
\end{ass}
A family of linear functionals $\{  \widehat \varphi_{\mathbf{i}} \}_{\mathbf{i} \in \mathbf{I}}$ is called a \emph{dual basis} for the set of tensor-product B-splines $\widehat{\mathcal{S}}_{h}$ if it verifies
\begin{align*}
	 \widehat \varphi_{\mathbf{i}} (\widehat B_{\mathbf{j}}) = \delta_{\mathbf{i} \mathbf{j}},
\end{align*}
where $\delta_{\mathbf{i} \mathbf{j}}$ is the Kronecker delta.
\begin{thm}{\cite[Theorem 12.5]{schumaker_2007}} \label{thm_dualbasis_param}
	There exists a dual basis and a positive constant $C$, independent of $h$, satisfying
	\begin{align*}
		| \widehat \varphi_{\mathbf{i}}(\widehat u) | \leq C h^{- \frac{d}{2}} \| \widehat u \|_{L^2({Q'_{\mathbf{i}}})}, & & \forall \, \widehat u \in L^2((0,1)^d) \text{ and } \forall \, \mathbf{i} \in \mathbf{I}.
	\end{align*} 
\end{thm}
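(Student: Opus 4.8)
The plan is to construct first a locally supported, $L^2$-stable dual basis in one dimension, and then tensorize it; the multivariate estimate with the factor $h^{-d/2}$ will come out of the univariate one with factor $h^{-1/2}$ raised to the $d$-th power.

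\emph{Univariate construction.} Fix a nontrivial degree-$p$ B-spline $\widehat b_{i,p}$, with $\supp(\widehat b_{i,p})=[\xi_i,\xi_{i+p+1}]$. Since $\xi_i<\xi_{i+p+1}$, this interval contains at least one nonempty knot span $I_i=(\xi_j,\xi_{j+1})$, and $\widehat b_{i,p}$ is one of the exactly $p+1$ B-splines that are positive on $I_i$; these B-splines restrict on $I_i$ to a basis of $\mathbb{P}_p(I_i)$, the space of polynomials of degree $\le p$ (a standard property of B-splines on a single knot span). Endow $\mathbb{P}_p(I_i)$ with the $L^2(I_i)$ inner product, let $\widehat\varphi_{i,p}$ be the element of the associated dual basis that is biorthogonal to $\widehat b_{i,p}|_{I_i}$, and extend it to all of $L^2((0,1))$ by composing the $L^2(I_i)$-orthogonal projection onto $\mathbb{P}_p(I_i)$ with this functional. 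Then $\widehat\varphi_{i,p}(\widehat b_{k,p})=\delta_{ik}$ for every $k$: B-splines whose support does not essentially meet $I_i$ vanish on $I_i$ and are annihilated, while the remaining ones are handled by biorthogonality. The functional is represented by some $g_i\in\mathbb{P}_p(I_i)$, i.e.\ $\widehat\varphi_{i,p}(\widehat u)=\int_{I_i} g_i\,\widehat u$, so Cauchy--Schwarz gives $|\widehat\varphi_{i,p}(\widehat u)|\le \|g_i\|_{L^2(I_i)}\,\|\widehat u\|_{L^2([\xi_i,\xi_{i+p+1}])}$.

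\emph{Uniform bound by scaling.} It remains to bound $\|g_i\|_{L^2(I_i)}$ by $Ch^{-1/2}$ with $C$ independent of $h$ and $i$. Pull $I_i$ back to $[0,1]$ by the affine change of variables of length $|I_i|$; the pulled-back B-splines are degree-$p$ polynomials forming a basis of $\mathbb{P}_p([0,1])$, biorthogonality is preserved, and the change of variables multiplies the $L^2$ inner product by $|I_i|$, whence the reference representing function is $|I_i|^{-1}\widehat g_i$ and $\|g_i\|_{L^2(I_i)}=|I_i|^{-1/2}\|\widehat g_i\|_{L^2(0,1)}$. Since $p$ is fixed and, by Assumption~\ref{ass:quasi_uniform_mesh}, all knot spans involved are $\le h\le\alpha^{-1}|I_i|$, the reference knot configuration ranges over a compact set on which the B-spline basis of $\mathbb{P}_p$ never degenerates; hence the dual basis depends continuously and $\|\widehat g_i\|_{L^2(0,1)}\le C(p,\alpha)$. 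With $|I_i|\ge\alpha h$ this gives the univariate estimate $\|g_i\|_{L^2(I_i)}\le C(p,\alpha)(\alpha h)^{-1/2}$.

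\emph{Tensorization and obstacle.} Set $\widehat\varphi_{\mathbf i}:=\widehat\varphi_{i_1,p}\otimes\cdots\otimes\widehat\varphi_{i_d,p}$. Then $\widehat\varphi_{\mathbf i}(\widehat B_{\mathbf j})=\prod_{k=1}^{d}\widehat\varphi_{i_k,p}(\widehat b_{j_k,p})=\prod_{k=1}^{d}\delta_{i_k j_k}=\delta_{\mathbf i\mathbf j}$, so $\{\widehat\varphi_{\mathbf i}\}_{\mathbf i\in\mathbf I}$ is a dual basis for $\widehat{\mathcal S}_h([0,1]^d)$; it is represented by $g_{i_1}\otimes\cdots\otimes g_{i_d}$, supported in $\prod_{k=1}^{d} I_{i_k}\subseteq\prod_{k=1}^{d}[\xi_{k,i_k},\xi_{k,i_k+p+1}]=Q'_{\mathbf i}$, with $L^2$-norm $\prod_{k=1}^{d}\|g_{i_k}\|_{L^2}\le\big(C(p,\alpha)\alpha^{-1/2}\big)^{d}h^{-d/2}$ by Fubini, and Cauchy--Schwarz on the box $Q'_{\mathbf i}$ yields $|\widehat\varphi_{\mathbf i}(\widehat u)|\le C\,h^{-d/2}\|\widehat u\|_{L^2(Q'_{\mathbf i})}$. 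The main obstacle is exactly the step in the previous paragraph: arguing that, after rescaling the selected knot span to $[0,1]$, the $L^2$-dual basis of the resulting $(p+1)$-dimensional polynomial space has representing function of norm bounded solely by $p$ and $\alpha$. This rests on compactness of the admissible reference knot configurations together with the non-degeneracy of the B-spline basis on any nonempty span; once this is in hand, biorthogonality, the $h^{-d/2}$ scaling and the tensor-product estimate are routine bookkeeping.
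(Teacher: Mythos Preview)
The paper does not give its own proof of this statement; it simply quotes the result from Schumaker's book (Theorem~12.5 there), so there is no in-paper argument to compare against. Your outline is a correct reconstruction of essentially the classical construction: one builds a dual functional for each univariate B-spline by restricting to a single nonempty knot span in its support, taking the $L^2$-dual there with respect to the local B-spline basis of $\mathbb{P}_p$, and then tensorizes.

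Two remarks on the point you flag as the ``obstacle''. First, your compactness argument is valid: under Assumption~\ref{ass:quasi_uniform_mesh} the rescaled surrounding knots lie in a fixed compact set, the $p{+}1$ B-splines that are nonzero on a nonempty knot span always form a basis of $\mathbb{P}_p$ there (continuously in the knot parameters, even when neighbouring knots coalesce---the extremal case is the Bernstein basis), so the Gram matrix is continuous and everywhere nonsingular and its inverse is uniformly bounded. Second, the route taken in the cited references (de~Boor, Schumaker) is slightly different and sharper: instead of a soft compactness argument they produce an \emph{explicit} dual functional and an explicit constant of the form $C(p)=(2p{+}1)9^{p}$ (this is exactly the constant the paper later uses in Lemma~\ref{lem:cond_bound_param}), which in particular does not require quasi-uniformity for the dual-basis bound itself. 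Your approach trades that explicit $p$-dependence for conceptual simplicity, which is fine for the purposes of the present theorem since only $h$-independence is claimed.
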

\subsection{Isogeometric space on a patch}
Now, we consider a single patch domain $\Omega \subset \mathbb{R}^d$, given by a $d$-dimensional spline parametrization $\vett[F]$, that is
\begin{align*}
	\Omega = \boldsymbol{F} (\widehat{\Omega}), & & \text{with } \boldsymbol{F}(\boldsymbol{\xi}) = \sum_{\mathbf{i}}  \boldsymbol{C}_{\mathbf{i}}  \widehat  B_{\mathbf{i}} (\boldsymbol{\xi}),
\end{align*}
where $\boldsymbol{C}_{\mathbf{i}} $ are the control points and
$   \widehat B_{\mathbf{i}}$ are tensor-product B-spline basis
functions defined on the parametric patch $\widehat{\Omega}:=(0,1)^d$.
{\Ru In the setting of this paper, $\Omega $ and its parametrization $\vett[F]$ do  not change when $h$- and $p$-refinements are performed}.
\begin{ass}\label{ass:reg_singlepatch}
	Let $\vett[F] \in C^1([0,1]^d)$ and assume that for all $\vett[x] \in [0,1]^d$ \linebreak$\jac{\vett[F](\vett[x])} >0$.
\end{ass}
Following the 
isoparametric paradigm, the isogeometric basis functions $B_{\mathbf{i}}$
are defined as $B_{\mathbf{i}} =  \widehat B_{\mathbf{i}}\circ
\boldsymbol{F} ^{-1}$. 
Thus, the isogeometric space on $\Omega$ is defined as
\begin{equation*}\label{eq:disc_space}
\mathcal{S}_{h} = \mathcal{S}_{h}(\Omega) := \mathrm{span}\left\{  B_{\mathbf{i}}:=\widehat B_{\mathbf{i}} \circ \mathbf{F}^{-1} \ : \ \mathbf{i} \in \mathbf{I}   \right\}. 
\end{equation*}
\begin{cor}\label{cor_dualbasis_physical}
	The family $\{ \varphi_{\mathbf{i}} \}_{\mathbf{i} \in \mathbf{I}}$, defined as
	\begin{align*}
		 \varphi_{\mathbf{i}}(u) := \widehat \varphi_{\mathbf{i}}(u \circ \mathbf{F}), & & \forall \, \mathbf{i} \in \mathbf{I},
	\end{align*}
	is a dual basis for $\mathcal{S}_{h}$. Moreover, there exists a positive constant $C$, independent of $h$, satisfying
	\begin{align*}
	|  \varphi_{\mathbf{i}}( u) | \leq C h^{- \frac{d}{2}} \|  u \|_{L^2(\mathbf{F}(Q'_{\mathbf{i}}))}, & & \forall \,  u \in L^2(\Omega) \text{ and } \forall \, \mathbf{i} \in \mathbf{I}.
	\end{align*}
\end{cor}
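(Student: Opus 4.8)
The plan is to reduce both assertions to their parametric counterparts by pulling back through the geometry map, so the whole argument rests on the change of variables $\boldsymbol{y}=\boldsymbol{F}(\boldsymbol{x})$. The first preliminary step is to record the consequences of Assumption \ref{ass:reg_singlepatch}: since $\boldsymbol{F}\in C^1([0,1]^d)$ and $\jac{\boldsymbol{F}}>0$ on the compact set $[0,1]^d$, the determinant $\jac{\boldsymbol{F}}$ is bounded above and below by positive constants, $\boldsymbol{F}$ is a $C^1$-diffeomorphism onto $\Omega$ (inverse function theorem), and $\jac{\boldsymbol{F}^{-1}}$ is likewise bounded. In particular the change-of-variables formula shows that $u\mapsto u\circ\boldsymbol{F}$ maps $L^2(\Omega)$ boundedly onto $L^2((0,1)^d)$, so each $\varphi_{\mathbf{i}}(u):=\widehat\varphi_{\mathbf{i}}(u\circ\boldsymbol{F})$ is well defined, and it is linear because $u\mapsto u\circ\boldsymbol{F}$ and $\widehat\varphi_{\mathbf{i}}$ are.

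For the dual-basis property I would simply note that $B_{\mathbf{j}}=\widehat B_{\mathbf{j}}\circ\boldsymbol{F}^{-1}$ gives $B_{\mathbf{j}}\circ\boldsymbol{F}=\widehat B_{\mathbf{j}}$, hence $\varphi_{\mathbf{i}}(B_{\mathbf{j}})=\widehat\varphi_{\mathbf{i}}(\widehat B_{\mathbf{j}})=\delta_{\mathbf{i}\mathbf{j}}$ by Theorem \ref{thm_dualbasis_param}; since $\{B_{\mathbf{i}}\}$ spans $\mathcal{S}_h$, this is exactly the required biorthogonality. For the quantitative bound I would apply Theorem \ref{thm_dualbasis_param} to $\widehat u:=u\circ\boldsymbol{F}$, obtaining $|\varphi_{\mathbf{i}}(u)|=|\widehat\varphi_{\mathbf{i}}(u\circ\boldsymbol{F})|\le C\,h^{-d/2}\|u\circ\boldsymbol{F}\|_{L^2(Q'_{\mathbf{i}})}$, and then localize the change of variables to the support $Q'_{\mathbf{i}}$, whose image is $\boldsymbol{F}(Q'_{\mathbf{i}})$: writing $\|u\circ\boldsymbol{F}\|_{L^2(Q'_{\mathbf{i}})}^2=\int_{\boldsymbol{F}(Q'_{\mathbf{i}})}|u(\boldsymbol{y})|^2\,|\jac{\boldsymbol{F}^{-1}}(\boldsymbol{y})|\,d\boldsymbol{y}\le\big(\inf_{[0,1]^d}\jac{\boldsymbol{F}}\big)^{-1}\|u\|_{L^2(\boldsymbol{F}(Q'_{\mathbf{i}}))}^2$. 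Combining the two inequalities and renaming the product of constants as $C$ yields the claim.

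The only point that needs genuine care — and where I would be explicit — is that the new constant $C$ is independent of $h$. This is not automatic from Theorem \ref{thm_dualbasis_param} alone; it follows because the geometric factor $\big(\inf_{[0,1]^d}\jac{\boldsymbol{F}}\big)^{-1}$ depends only on $\boldsymbol{F}$, and, by the standing assumption that $\Omega$ and its parametrization $\boldsymbol{F}$ are fixed and do not change under $h$- or $p$-refinement, this factor is a fixed number. Everything else is a routine computation, so I do not anticipate any real obstacle beyond bookkeeping the constants correctly.
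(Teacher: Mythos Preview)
Your proposal is correct and follows exactly the approach the paper intends: the paper's own proof consists of the single sentence ``The inequality is obtained from Theorem~\ref{thm_dualbasis_param} by a standard change of variables,'' and you have simply spelled out that change of variables (together with the biorthogonality check and the observation that the geometric factor is $h$-independent). There is nothing to add.
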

\begin{proof}
	The inequality is obtained from Theorem \ref{thm_dualbasis_param} by a standard change of variables.
\end{proof}

\begin{prop}\label{prop:stability}
	There exists a positive constant $C$, independent of $h$ and $p$, such that
	\begin{align*}
		\| u \|_{L^2{(\vett[F](Q_{\mathbf{i}}))}} \leq C h^{\frac{d}{2}} \max_{\mathbf{j} \in \mathcal{I}_{\mathbf{i}}} |\varphi_{\mathbf{j}}(u)|, & & \forall \,  u \in L^2(\Omega) \text{ and } \forall \, \mathbf{i} \in \mathbf{I}_e.
	\end{align*}
\end{prop}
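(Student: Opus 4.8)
The plan is to pull the estimate back to the parametric patch, where it collapses to an elementwise statement that follows from the partition of unity of the B-spline basis. Note that for a general $u\in L^2(\Omega)$ the right-hand side may vanish while the left-hand side does not, so the bound is meaningful only for $u$ in the spline space; accordingly I would read the statement as $\forall\,u\in\mathcal{S}_{h}(\Omega)$ and use the reconstruction identity $u=\sum_{\mathbf{j}\in\mathbf{I}}\varphi_{\mathbf{j}}(u)\,B_{\mathbf{j}}$, which is immediate from the biorthogonality $\varphi_{\mathbf{i}}(B_{\mathbf{j}})=\delta_{\mathbf{i}\mathbf{j}}$ and the linear independence of the $B_{\mathbf{j}}$. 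Then I would set $\widehat u:=u\circ\vett[F]$: Assumption \ref{ass:reg_singlepatch} gives $0<\jac{\vett[F]}\le C_{\vett[F]}$ on the compact $[0,1]^d$, so the change of variables $x=\vett[F](\zeta)$ (as in Corollary \ref{cor_dualbasis_physical}) yields $\|u\|_{L^2(\vett[F](Q_{\mathbf{i}}))}\le C_{\vett[F]}^{1/2}\|\widehat u\|_{L^2(Q_{\mathbf{i}})}$, while $\varphi_{\mathbf{j}}(u)=\widehat\varphi_{\mathbf{j}}(\widehat u)$ and $\widehat u=\sum_{\mathbf{j}}\widehat\varphi_{\mathbf{j}}(\widehat u)\,\widehat B_{\mathbf{j}}$. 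It thus suffices to prove the parametric estimate $\|\widehat u\|_{L^2(Q_{\mathbf{i}})}\le h^{d/2}\max_{\mathbf{j}\in\mathcal{I}_{\mathbf{i}}}|\widehat\varphi_{\mathbf{j}}(\widehat u)|$.

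For the parametric estimate I would fix an element index $\mathbf{i}\in\mathbf{I}_e$ and observe, from the definition \eqref{def:index_set_1} of $\mathcal{I}_{\mathbf{i}}$, that on $Q_{\mathbf{i}}$ the B-splines $\widehat B_{\mathbf{j}}$ with $\mathbf{j}\notin\mathcal{I}_{\mathbf{i}}$ vanish identically; hence on $Q_{\mathbf{i}}$ one has both $\widehat u=\sum_{\mathbf{j}\in\mathcal{I}_{\mathbf{i}}}\widehat\varphi_{\mathbf{j}}(\widehat u)\,\widehat B_{\mathbf{j}}$ and, by the tensor-product partition of unity \eqref{eq:partition_unity}, $\sum_{\mathbf{j}\in\mathcal{I}_{\mathbf{i}}}\widehat B_{\mathbf{j}}\equiv 1$. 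Since each $\widehat B_{\mathbf{j}}$ is non-negative, this gives the pointwise bound $|\widehat u(\zeta)|\le\big(\max_{\mathbf{j}\in\mathcal{I}_{\mathbf{i}}}|\widehat\varphi_{\mathbf{j}}(\widehat u)|\big)\sum_{\mathbf{j}\in\mathcal{I}_{\mathbf{i}}}\widehat B_{\mathbf{j}}(\zeta)=\max_{\mathbf{j}\in\mathcal{I}_{\mathbf{i}}}|\widehat\varphi_{\mathbf{j}}(\widehat u)|$ for every $\zeta\in Q_{\mathbf{i}}$. Integrating over $Q_{\mathbf{i}}$ and using $|Q_{\mathbf{i}}|=\prod_{k=1}^{d}(\xi_{k,i_k+1}-\xi_{k,i_k})\le h^{d}$ (from \eqref{def:Q} and the definition of $h$) produces the parametric estimate; combining it with the change-of-variables bound of the first step finishes the argument, with $C=C_{\vett[F]}^{1/2}$.

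The computation is elementary, so the only point that really requires care is keeping the constant independent of $p$: this is exactly why the partition of unity must be invoked to collapse $\sum_{\mathbf{j}\in\mathcal{I}_{\mathbf{i}}}\widehat B_{\mathbf{j}}$ to $1$. Estimating $\|\widehat u\|_{L^2(Q_{\mathbf{i}})}$ instead by the triangle inequality term by term would be fatal, since $\#\mathcal{I}_{\mathbf{i}}\le(p+1)^{d}$ and $\|\widehat B_{\mathbf{j}}\|_{L^2(Q_{\mathbf{i}})}\le h^{d/2}$ only, so one would pick up a spurious factor $(p+1)^{d}$ (or $\sqrt{(p+1)^{d}}$ after a Cauchy–Schwarz step). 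The other thing to watch is the spline hypothesis on $u$ noted above, which is what makes the reconstruction identity — and hence the whole estimate — available in the first place.
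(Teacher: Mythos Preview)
Your proof is correct and follows essentially the same strategy as the paper: use the reconstruction $u=\sum_{\mathbf{j}}\varphi_{\mathbf{j}}(u)B_{\mathbf{j}}$, pull the maximal coefficient out, collapse the remaining sum via the partition of unity, and bound the measure of the element by $Ch^{d}$. The only cosmetic difference is that the paper works directly on the physical element $\vett[F](Q_{\mathbf{i}})$ (using the physical partition of unity $\sum_{\mathbf{j}}B_{\mathbf{j}}\equiv 1$ and the bound $|\vett[F](Q_{\mathbf{i}})|\le Ch^{d}$ from Assumption~\ref{ass:reg_singlepatch}), whereas you first pull back to the parametric patch and then integrate over $Q_{\mathbf{i}}$; the two routes are equivalent. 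Your observation that the statement is only meaningful for $u\in\mathcal{S}_{h}(\Omega)$ is also correct and is implicitly used in the paper's first equality.
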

\begin{proof}
	Using the extension to $d$-variate isogeometric functions of the partition of unity property \eqref{eq:partition_unity}, it holds
	\begin{align*}
		\| u \|^2_{L^2(\vett[F](Q_{\mathbf{i}}))} & =  \int_{ \vett[F](Q_{\mathbf{i}})} \left( \sum_{\mathbf{i} \in \mathcal{I}_{\mathbf{i}}} \varphi_{\mathbf{i}}(u) B_{\mathbf{i}}(\vett[x]) \right)^2 d \vett[x] \\
		&\leq \int_{ \vett[F](Q_{\mathbf{i}})} \left( \max_{\mathbf{j} \in \mathcal{I}_{\mathbf{i}}} |\varphi_{\mathbf{j}}(u)|  \sum_{\mathbf{j} \in \mathcal{I}_{\mathbf{i}}} B_{\mathbf{i}}(\vett[x]) \right)^2 d \vett[x] \\
		& = |\vett[F](Q_{\mathbf{i}})|	\max_{\mathbf{j} \in \mathcal{I}_{\mathbf{i}}} | \varphi_{\mathbf{j}}(u)|^2 \leq C h^{d} \max_{\mathbf{j} \in \mathcal{I}_{\mathbf{i}}} |\varphi_{\mathbf{j}}(u)|^2.
	\end{align*}
\end{proof}
By introducing a co-lexicographical reordering of the basis functions,
with a minor abuse of notation we will also write in what follows 
\begin{equation}\label{def:parametric_spline}
\mathcal{S}_{h}= \spann \left\{  B_{\mathbf{i}} : \, \mathbf{i} \in \mathbf{I} \right\} = \spann\left\{ B_{i} \right\}_{i=1}^{\Ndof}.
\end{equation}
\subsection{Isogeometric spaces on a multipatch domain}\label{sec:mp}
We follow the notation of \cite{daveiga_buffa_sangalli_2014}.
A multipatch domain $\Omega \subset \mathbb{R}^d$ is an open set, defined as the union of $\NOmega$ subdomains,
\begin{equation}\label{def:patches}
	\overline{\Omega}= \bigcup_{{\ptca}=1}^{\NOmega} \overline{\Omega^{({\ptca})}},
\end{equation} 
where the subdomains $\Omega^{({\ptca})}=\vett[F]^{({\ptca})}(\widehat{\Omega})$ are referred to as patches and are assumed to be disjoint. Each $\vett[F]^{({\ptca})}$ is a different spline parametrization that satisfies the following assumption.
\begin{ass}\label{ass:multipatch}
	Let $\vett[F]^{(\ptca)} \in C^1([0,1]^d)$ and assume that for all $\vett[x] \in [0,1]^d$, $\jac{\vett[F]^{(\ptca)}(\vett[x])} >0$, for all $\ptca=1, \ldots, \NOmega$.
\end{ass}
In the following, the superindex $({\ptca})$ will identify entities that refer to $\Omega^{({\ptca})}$. 
Then, following the same construction as above, we introduce, for each patch $\Omega^{({\ptca})}$,  B-spline spaces
\begin{align*}
	\widehat{\mathcal{S}}^{({\ptca})}_{h}:= \spann\left\{ \widehat{B}^{({\ptca})}_{i} : \, i=1, \ldots , \Ndof^{({\ptca})} \right\}. 
\end{align*}
and isogeometric spaces
\begin{align*}
	\mathcal{S}^{({\ptca})}_{h}:= \spann\left\{  B^{({\ptca})}_{i} \ : \ i=1,\ldots , N_{\text{dof}}^{({\ptca})}   \right\}.
\end{align*}
We assume for simplicity that the degree $p$ is the same for all patches.
For the definition of the isogeometric space in the whole $\Omega$, we further impose continuity at the interfaces between patches, that is
\begin{equation}\label{def:global_space}
	V_h:=\left\{ v \in C^0(\Omega) : v|_{\Omega^{({\ptca})}} \in \mathcal{S}^{({\ptca})}_{h} \text{ for }{\ptca}=1, \ldots , \NOmega \right\}.
\end{equation}
To construct a basis for space $V_{h}$, we introduce a suitable conformity assumption. For all $\ptca, \ptcb \in \{ 1,\ldots , \NOmega \}$, with $\ptca \neq \ptcb$, let $\Gamma_{{\ptca} {\ptcb}}=\partial \Omega^{({\ptca})} \cap \partial \Omega^{({\ptcb})}$ be the interface between the patches $\Omega^{({\ptca})}$ and $\Omega^{({\ptcb})}$.
\begin{ass}\label{ass:conformity}
	We assume:
	\begin{enumerate}
		\item $\Gamma_{{\ptca}{\ptcb}}$ is either a vertex or the image of a full edge or the image of a full face for both parametric domains.
		\item For each $B^{({\ptca})}_{\mathbf{i}} \in \mathcal{S}_{h}^{({\ptca})}$ such that $\supp(B^{({\ptca})}_{\mathbf{i}}) \cap \Gamma_{{\ptca}{\ptcb}} \neq \emptyset$, there exists a function $B^{({\ptcb})}_{\mathbf{j}} \in \mathcal{S}_{h}^{({\ptcb})}$ such that $B^{({\ptca})}_{\mathbf{i}} |_{ \Gamma_{{\ptca}{\ptcb}}}=B^{({\ptcb})}_{\mathbf{j}} |_{\Gamma_{{\ptca}{\ptcb}}}$.
	\end{enumerate}
\end{ass}
We define, for each patch $\Omega^{({\ptca})}$, an application
\begin{equation*}
	G^{({\ptca})}: \{ 1, \ldots , \Ndof^{(\ptca)} \} \rightarrow \mathcal{J}=\{ 1, \ldots , \dim(V_{h}) \},
\end{equation*}
in such a way that $G^{({\ptca})}(i)=G^{({\ptcb})}(j)$ if and only if $\Gamma_{{\ptca}{\ptcb}} \neq \emptyset$ and \linebreak \mbox{$B^{({\ptca})}_{i} |_{\Gamma_{{\ptca}{\ptcb}}}=B^{({\ptcb})}_{j} |_{\Gamma_{{\ptca}{\ptcb}}}$}. Moreover, we define, for each global index $l \in \mathcal{J}$, the set of pairs $\mathcal{J}_l:= \{ ({\ptca},i): \, G^{({\ptca})}(i)=l \}$, which collects the local indices of patchwise contributions to the global function, and the scalar 
\begin{equation}\label{def:n_l}
	n_l:= \# \mathcal{J}_l,
\end{equation}
that expresses the patch multiplicity for the global index $l$.
Furthermore, let 
\begin{equation}\label{def:N_patch}
\Npatch:=\max \{ n_l : \, l \in \mathcal{J} \}
\end{equation}
be the maximum number of adjacent patches (i.e., whose closure has non-empty intersection).
We define, for each $l \in \mathcal{J}$, the global basis function
\begin{equation}\label{def:mp_basis}
	B_l(\vett[x]):= \begin{cases}
						B^{({\ptca})}_{i}(\vett[x]) & \text{ if } \vett[x] \in \overline{\Omega ^{({\ptca})}} \text{ and } ({\ptca},i) \in \mathcal{J}_l, \\
						0 & \text{ otherwise},
					\end{cases}
\end{equation} 
which is continuous due to Assumption \ref{ass:conformity}. Then
\begin{equation}\label{def:V_h}
	V_h= \spann \{ B_l: \, l \in \mathcal{J} \}.
\end{equation}
The set $\{ B_l: \, l \in \mathcal{J} \}$
where $B_l$ is defined as in \eqref{def:mp_basis}, represents a basis for $V_{h}$.
Finally, we also introduce the index set $\mathcal{J}^{(\ptca)} \subset \mathcal{J}$ such that $l \in \mathcal{J}^{(\ptca)}$ if and only if $l=G^{(\ptca)}(i)$ for some $i$. Clearly $\# \mathcal{J}^{(\ptca)}= \Ndof^{(\ptca)}$ and $\mathcal{J}^{(\ptca)}$ can be used directly as index set for $\widehat{\mathcal{S}}_h^{(\ptca)}$ and $\mathcal{S}_h^{(\ptca)}$, with minor abuse of notation.

\subsection{Kronecker product}
The Kronecker product of two matrices $\mathbf{A}\in\mathbb{C}^{n_1\times n_2}$ and $\mathbf{B}\in\mathbb{C}^{n_3\times n_4}$ is defined as
\begin{equation*}
\mathbf{A} \otimes \mathbf{B}:=\begin{bmatrix}
[\mathbf{A}]_{1,1}\mathbf{B}  & \dots& [\mathbf{A}]_{1,n_2}\mathbf{B}\\
\vdots& \ddots &\vdots\\
[\mathbf{A}]_{n_1, 1}\mathbf{B}& \dots & [\mathbf{A}]_{n_1, n_2}\mathbf{B}
\end{bmatrix}\in \mathbb{C}^{n_1n_3\times n_2 n_4},
\end{equation*}
where the $ij$-th entry of the matrix $\mathbf{A}$ is denoted by $[\mathbf{A}]_{i,j}$.
The most important properties of the Kronecker product that we will exploit in this work are the following:
\begin{itemize}
	\item if $\mathbf{A}$, $\mathbf{B}$, $\mathbf{C}$ and $\mathbf{D}$ are matrices of conforming order, then  it holds
	\begin{equation}
	\label{eq:kron_prod}
	(\mathbf{A}\otimes \mathbf{B}) \cdot (\mathbf{C}\otimes \mathbf{D}) = (\mathbf{AC}) \otimes (\mathbf{BD});
	\end{equation}
	\item if $\mathbf{A}$ and $\mathbf{B}$ are non-singular, then
	\begin{equation}
	\label{eq:kron_inv}
	(\mathbf{A}\otimes \mathbf{B})^{-1}=\mathbf{A}^{-1}\otimes\mathbf{B}^{-1}.
	\end{equation}  
\end{itemize}
Finally, we recall that the matrix-vector product  can be efficiently computed
for a  matrix that has a Kronecker product structure. For this
purpose we define, for $m=1,\dots,d,$ the $m$-mode product  $\times_m$  of a tensor $\mathbf{X}\in\mathbb{C}^{n_1\times\dots\times n_{d}}$ with a matrix $\mathbf{M}\in\mathbb{C}^{k \times n_m}$ as a tensor of size $n_1\times\dots\times n_{m-1}\times k \times n_{m+1}\times \ldots n_{d}$ whose elements are  
\begin{equation*}
\left[ \mathbf{X}\times_m \mathbf{M} \right]_{i_1, \dots, i_{d}} = \sum_{j=1}^{n_m} [\mathbf{X}]_{i_1,,\dots, i_{m-1},j,i_{m+1}\dots,i_{d}}[\mathbf{M}]_{i_m,j }.
\end{equation*}
Then, given $\mathbf{M}_i\in\mathbb{C}^{k_i\times n_i}$ for $i=1,\dots, d$, it holds
\begin{equation}\label{eq:kronecker_vec_prod}
\left(\mathbf{M}_{d}\otimes\dots\otimes \mathbf{M}_1\right)\mathrm{vec}\left(\mathbf{X}\right)=\mathrm{vec}\left(\mathbf{X}\times_1 \mathbf{M}_1\times_2 \dots \times_{d}\mathbf{M}_{d} \right),
\end{equation} 
where the vectorization operator ``vec''   applied to a tensor stacks its entries  into a column vector as
\begin{align*}
	[\mathrm{vec}(\mathbf{X})]_{j}=[\mathbf{X}]_{i_1,\ldots,i_{d}},
\end{align*}
for $i_l=1,\dots,n_{l}$, $l=1,\ldots,d$  and
\begin{equation*}
j=i_1+\sum_{k=2}^{d}\left[(i_k-1)\Pi_{l=1}^{k-1}n_l\right].
\end{equation*}
For more details on Kronecker product we refer to \cite{Kolda2009}.

\section{Mass preconditioner on a patch} \label{sec:single_patch}
In this section we propose a preconditioner for the Galerkin mass matrix associated to a single patch domain, denoted $\Omega$, that is
{\Rd \begin{equation}\label{eq:mass_matrix}
	[\M]_{i,j}=\int_{\widehat{\Omega}}  \widehat{B}_{i} (\vett[x]) \widehat{B}_{j}(\vett[x]) |\jac{\vett[F](\vett[x])}|d \vett[x].
\end{equation}}
Generalizing, we will consider
{\Rd \begin{align}\label{eq:mass_weighted}
[\M]_{i,j}=\int_{\widehat{\Omega}}  \widehat{B}_{i} (\vett[x]) \widehat{B}_{j} (\vett[x]) \omega (\vett[x])d \vett[x]
\end{align}}
for a weight $\omega$ that fulfils the following assumption.
\begin{ass}\label{ass:g_regularity}
	We assume $\omega \in C^0([0,1]^d)$	and $\omega(\vett[x]) > 0$, for all ${\vett[x] \in [0,1]^d}$.
\end{ass}
Let 
\begin{equation*}
	\omega_{\mathrm{min}}=\min_{\vett[x] \in [0,1]^d} \omega(\vett[x]),
\end{equation*}
that, thanks to Assumption \ref{ass:g_regularity}, is strictly positive.
Furthermore, thanks to Heine-Cantor theorem, the function $\omega$ is uniformly continuous, that is there exists a non-decreasing $\mu:[0,\infty) \rightarrow [0,\infty)$ such that
\begin{align} \label{eq:uniform_cont_1}
	|\omega(\vett[x]_1)-\omega(\vett[x]_2)| \leq \mu(|\vett[x]_1-\vett[x]_2|), & & \forall \vett[x]_1, \vett[x]_2 \in \widehat{\Omega}
\end{align}
and
\begin{align} \label{eq:uniform_cont_2}
 \lim_{t \rightarrow 0^+}\mu(t)=0.
\end{align}
As a preconditioner for the mass matrix $\M$, defined in \eqref{eq:mass_weighted}, we consider
\begin{equation}\label{def:single_patch_prec}
\P:=\Prs,
\end{equation}
where
\begin{align}\label{def:matrices_1}
[\widehat{\M}]_{i,j}:=\int_{\widehat{\Omega}} \widehat{B}_{i}(\vett[x]) \widehat{B}_{j}(\vett[x]) d \vett[x], & & \widehat{\D}:=\text{diag}\left( \widehat{\M} \right),  & & \D:=\text{diag}\left( \M \right).
\end{align}
From now on, 
given $u \in \mathcal{S}_h$, we will denote by $\vett[u]$ the vector
containing the coordinates of $u$ with respect to spline basis.
{\Ru \begin{lem}\label{lem:cond_bound_param}
	There exist a constant $\widehat{C} >0$, independent of $h$, such that
	\begin{equation}\label{eq:bound-on-widehatM}
		\widehat{C} h^d \leq
                \lambda_{\mathrm{min}}(\widehat{\M}) \leq
                \lambda_{\mathrm{max}}(\widehat{\M}) \leq   h^d,
	\end{equation}
	where $\lambda_{\mathrm{max}}(\widehat{\M})$ and $\lambda_{\mathrm{min}}(\widehat{\M})$ are the maximum and minimum eigenvalue of $\widehat{\M}$.
\end{lem}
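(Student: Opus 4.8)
The plan is to bound the Rayleigh quotient $\vett[u]^{\top}\widehat{\M}\vett[u] / (\vett[u]^{\top}\vett[u])$ from above and below. For a function $u = \sum_i u_i \widehat{B}_i \in \widehat{\mathcal{S}}_h$ we have $\vett[u]^{\top}\widehat{\M}\vett[u] = \|u\|_{L^2(\widehat{\Omega})}^2$, so the statement is equivalent to showing $\widehat{C} h^d \|\vett[u]\|_2^2 \le \|u\|_{L^2(\widehat{\Omega})}^2 \le h^d \|\vett[u]\|_2^2$ for all coefficient vectors $\vett[u]$.

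For the upper bound I would use the partition of unity property \eqref{eq:partition_unity} (tensorized) together with nonnegativity of the B-splines: on each mesh element $Q_{\mathbf{j}}$ only the $\widehat{B}_{\mathbf{i}}$ with $\mathbf{i} \in \mathcal{I}_{\mathbf{j}}$ are nonzero, so $\|u\|_{L^2(Q_{\mathbf{j}})}^2 = \int_{Q_{\mathbf{j}}} \big(\sum_{\mathbf{i}} u_{\mathbf{i}} \widehat{B}_{\mathbf{i}}\big)^2 \le \max_{\mathbf{i} \in \mathcal{I}_{\mathbf{j}}} u_{\mathbf{i}}^2 \cdot \int_{Q_{\mathbf{j}}} \big(\sum_{\mathbf{i}} \widehat{B}_{\mathbf{i}}\big)^2 = |Q_{\mathbf{j}}| \max_{\mathbf{i} \in \mathcal{I}_{\mathbf{j}}} u_{\mathbf{i}}^2 \le |Q_{\mathbf{j}}| \sum_{\mathbf{i} \in \mathcal{I}_{\mathbf{j}}} u_{\mathbf{i}}^2$. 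Summing over elements $\mathbf{j}$ and using that each basis index $\mathbf{i}$ appears in a bounded number of $\mathcal{I}_{\mathbf{j}}$ (actually $(p+1)^d$ elements, but we only need $h$-independence) and that $|Q_{\mathbf{j}}| \le h^d$, one gets $\|u\|_{L^2}^2 \le C h^d \|\vett[u]\|_2^2$; getting the sharp constant exactly $h^d$ (rather than $C h^d$) will require a slightly more careful accounting — working directionwise via the Kronecker factorization \eqref{eq:Kronecker-structure}, $\widehat{\M} = \widehat{\M}_d \otimes \cdots \otimes \widehat{\M}_1$, and bounding each univariate factor, since $\lambda_{\max}(\widehat{\M}) = \prod_k \lambda_{\max}(\widehat{\M}_k)$ and each univariate row sums to at most the local knot span length.

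For the lower bound I would invoke the dual basis from Theorem \ref{thm_dualbasis_param}: for each $\mathbf{i}$, $|u_{\mathbf{i}}| = |\widehat{\varphi}_{\mathbf{i}}(u)| \le C h^{-d/2} \|u\|_{L^2(Q'_{\mathbf{i}})}$, hence $u_{\mathbf{i}}^2 \le C^2 h^{-d} \|u\|_{L^2(Q'_{\mathbf{i}})}^2$. Summing over $\mathbf{i}$ and using the finite-overlap property of the supports $Q'_{\mathbf{i}}$ (each point of $\widehat{\Omega}$ lies in at most $(p+1)^d$ of them, an $h$-independent bound), we obtain $\|\vett[u]\|_2^2 \le C^2 h^{-d} \cdot (p+1)^d \|u\|_{L^2(\widehat{\Omega})}^2$, i.e. $\|u\|_{L^2}^2 \ge \widehat{C} h^d \|\vett[u]\|_2^2$ with $\widehat{C}$ independent of $h$ (and dependent on $p$ only). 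This is precisely the kind of stable-splitting argument used throughout the B-spline literature, and Proposition \ref{prop:stability} with $\omega \equiv 1$ is essentially the same estimate packaged elementwise.

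The main obstacle is obtaining the clean upper eigenvalue bound $\lambda_{\max}(\widehat{\M}) \le h^d$ with constant exactly one, rather than up to a multiplicative constant. The natural route is the Kronecker structure: it suffices to prove the one-dimensional statement $\lambda_{\max}(\widehat{\M}_k) \le h$, which follows from Gershgorin applied to $\widehat{\M}_k$ after noting that each row-sum $\sum_j \int \widehat{b}_{i} \widehat{b}_{j} = \int \widehat{b}_i (\sum_j \widehat{b}_j) = \int \widehat{b}_i \le |\supp(\widehat b_i) \cap (0,1)| \cdot 1$, but this overshoots $h$; the correct elementwise version is $\sum_{j} \int_{Q_\ell} \widehat b_i \widehat b_j = \int_{Q_\ell} \widehat b_i \le |Q_\ell| \le h$ per element, and since the $\widehat{\M}_k$ entry is a sum of such element contributions weighted into a quadratic form, a direct elementwise Cauchy–Schwarz (as in the upper-bound computation above) gives the factor $h$ per direction cleanly. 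I would therefore present the upper bound via the elementwise partition-of-unity estimate directly in $d$ dimensions (yielding the $h^d$) and the lower bound via the dual basis, and not rely on Gershgorin at all.
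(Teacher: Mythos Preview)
The paper's proof is shorter and takes a different route: it quotes the one-dimensional inequality
\[
\frac{\alpha h}{4(p+1)^3 9^{p}}\sum_i v_i^2 \;\le\; \Bigl\|\sum_i v_i \widehat b_{i,p}\Bigr\|_{L^2(0,1)}^2 \;\le\; h\sum_i v_i^2
\]
directly from de~Boor \cite{Boor1976Splines}, then tensorises via $\widehat{\M} = \widehat{\M}_d\otimes\cdots\otimes\widehat{\M}_1$ and applies Courant--Fischer. Your dual-basis argument for the lower bound is a valid alternative and is essentially what underlies de~Boor's stability constant, so that half is fine.

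Your upper bound, however, has a real gap. The elementwise partition-of-unity estimate you write, passing through $\max_{\mathbf i\in\mathcal I_{\mathbf j}} u_{\mathbf i}^2$ and then summing over elements, yields $\|u\|_{L^2}^2 \le \sum_{\mathbf{i}} |Q'_{\mathbf{i}}|\, u_{\mathbf{i}}^2 \le (p+1)^d h^d\,\|\vett[u]\|_2^2$, not $h^d\|\vett[u]\|_2^2$; and the claim that ``a direct elementwise Cauchy--Schwarz \ldots\ gives the factor $h$ per direction cleanly'' is never carried out---nothing in what you wrote removes the factor $(p+1)^d$. The missing ingredient is either Jensen's inequality, $\bigl(\sum_{\mathbf i} u_{\mathbf i}\widehat B_{\mathbf i}\bigr)^2 \le \sum_{\mathbf i} u_{\mathbf i}^2\, \widehat B_{\mathbf i}$ (convexity of $t\mapsto t^2$ against the partition of unity), or the exact B-spline integral $\int_0^1 \widehat b_{i,p} = (\xi_{i+p+1}-\xi_i)/(p+1) \le h$. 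Either gives $\|u\|_{L^2}^2 \le \sum_{\mathbf i} u_{\mathbf i}^2\int_{\widehat\Omega}\widehat B_{\mathbf i} \le h^d\|\vett[u]\|_2^2$ with the sharp constant; the latter identity also repairs your Gershgorin attempt, since the correct row sum of $\widehat{\M}_k$ is $\sum_j[\widehat{\M}_k]_{i,j}=\int_0^1\widehat b_{i,p}\le h$, not $|\supp\widehat b_{i,p}|\le(p+1)h$.
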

\begin{proof}
Recalling Assumption \ref{ass:quasi_uniform_mesh}, from the classical
result \cite[Theorem 5.1-5.2]{Boor1976Splines}, for $d=1$ we get
\begin{displaymath}
  \frac{\alpha h}{4(p+1)^3 9^{p}} \sum_{i=1}^{m} v^2_i  \leq \left \| 	\sum_{i=1}^{m} v_i \widehat b_{i,p} (x) \right \|
  _{L^2(0,1)}^2 \leq h \sum_{i=1}^{m} v^2_i 
\end{displaymath}
where $\alpha$ is the quasi-uniformity constant from Assumption
\ref{ass:quasi_uniform_mesh}. The bounds \eqref{eq:bound-on-widehatM} follow  by tensorization and
applying the Courant-Fischer theorem.  
\end{proof}}

\begin{cor}\label{cor:cond_bound_phys}
	Under Assumption \ref{ass:g_regularity}, there exist two positive {\Rd constants} $C_1,C_2$, independent of $h$, such that
	\begin{equation}\label{eq:est_eigenvalue_phys}
	C_1h^d \leq \lambda_{\mathrm{min}}({\M}) \leq \lambda_{\mathrm{max}}({\M}) \leq C_2 h^d.
	\end{equation}
\end{cor}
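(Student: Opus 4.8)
The plan is to obtain the bounds on the spectrum of $\M$ by comparing the weighted mass matrix with the parametric mass matrix $\Mp$, for which Lemma \ref{lem:cond_bound_param} already provides two-sided bounds of the right order $h^d$. The key observation is that for any $u \in \mathcal{S}_h$ with coordinate vector $\vett[u]$, one has
\begin{equation*}
	\vett[u]^T \M \vett[u] = \int_{\widehat{\Omega}} \left( \sum_i [\vett[u]]_i \widehat{B}_i(\vett[x]) \right)^2 \omega(\vett[x]) \, d\vett[x], \qquad \vett[u]^T \Mp \vett[u] = \int_{\widehat{\Omega}} \left( \sum_i [\vett[u]]_i \widehat{B}_i(\vett[x]) \right)^2 d\vett[x],
\end{equation*}
so that the pointwise bounds $\omega_{\mathrm{min}} \le \omega(\vett[x]) \le \max_{[0,1]^d} \omega =: \omega_{\mathrm{max}}$, which are finite and with $\omega_{\mathrm{min}} > 0$ by Assumption \ref{ass:g_regularity} together with compactness of $[0,1]^d$, give immediately
\begin{equation*}
	\omega_{\mathrm{min}} \, \vett[u]^T \Mp \vett[u] \le \vett[u]^T \M \vett[u] \le \omega_{\mathrm{max}} \, \vett[u]^T \Mp \vett[u], \qquad \forall \vett[u].
\end{equation*}

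Next I would invoke the Courant–Fischer (min-max) characterization of eigenvalues of symmetric matrices: from the Rayleigh-quotient inequality above it follows that $\lambda_{\mathrm{min}}(\M) \ge \omega_{\mathrm{min}} \lambda_{\mathrm{min}}(\Mp)$ and $\lambda_{\mathrm{max}}(\M) \le \omega_{\mathrm{max}} \lambda_{\mathrm{max}}(\Mp)$. Combining this with the bounds $\widehat{C} h^d \le \lambda_{\mathrm{min}}(\Mp)$ and $\lambda_{\mathrm{max}}(\Mp) \le h^d$ from Lemma \ref{lem:cond_bound_param} yields the claim with $C_1 := \omega_{\mathrm{min}} \widehat{C}$ and $C_2 := \omega_{\mathrm{max}}$, both positive and independent of $h$ (they depend only on $\omega$ and on the quasi-uniformity constant $\alpha$, via $\widehat{C}$).

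There is essentially no hard step here; the statement is a routine consequence of Lemma \ref{lem:cond_bound_param} and the fact that the weight is bounded above and below by positive constants. The only point deserving a word of care is that the upper bound on $\omega$ — needed for $C_2$ — is not part of Assumption \ref{ass:g_regularity} verbatim but follows from $\omega \in C^0([0,1]^d)$ and the compactness of $[0,1]^d$ (indeed it is already implicitly used in the excerpt, since $\mu(t) \to 0$ forces $\omega$ bounded). I would state this explicitly at the start of the proof and then the rest is the two-line Rayleigh-quotient argument above.
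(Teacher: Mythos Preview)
Your proof is correct and follows essentially the same approach as the paper: bound the Rayleigh quotient of $\M$ by $\omega_{\min}$ and $\omega_{\max}$ times that of $\Mp$, then apply Lemma~\ref{lem:cond_bound_param}, arriving at the identical constants $C_1 = \omega_{\min}\widehat{C}$ and $C_2 = \omega_{\max}$. Your explicit remark that $\omega_{\max}<\infty$ follows from continuity on a compact set is a nice touch that the paper leaves implicit.
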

\begin{proof}
	We observe that
	\begin{align*}
		\lambda_{\mathrm{min}}({\M}) & = \min_{\vett[v] \neq \vett[0]} \frac{\vett[v]^T \M \vett[v]}{\vett[v]^T \vett[v]} = \min_{\vett[v] \neq \vett[0]} \frac{\int_{\widehat{\Omega}}\left( \sum_{i=1}^{\Ndof} \vett[v]_i \widehat{B}_i(\vett[x]) \right)^2 \omega(\vett[x]) d \vett[x]}{\vett[v]^T \vett[v]} \\
		& \geq \min_{\vett[v] \neq \vett[0]} \frac{\int_{\widehat{\Omega}}\left( \sum_{i=1}^{\Ndof} \vett[v]_i \widehat{B}_i(\vett[x]) \right)^2 d \vett[x]}{\vett[v]^T \vett[v]} \inf_{\vett[x] \in \widehat{\Omega}} \omega(\vett[x]) = \lambda_{\mathrm{min}}({\widehat{\M}}) \inf_{\vett[x] \in \widehat{\Omega}} \omega(\vett[x])
	\end{align*}
	and similarly
	\begin{align*}
		\lambda_{\mathrm{max}}({\M}) & = \max_{\vett[v] \neq \vett[0]} \frac{\vett[v]^T \M \vett[v]}{\vett[v]^T \vett[v]} \leq \lambda_{\mathrm{max}}({\widehat{\M}}) \sup_{\vett[x] \in \widehat{\Omega}} \omega(\vett[x]).
	\end{align*}
	Thanks to Assumption \ref{ass:g_regularity}, $\omega$ is
        bounded from below and above by two positive {\Rd constants}
        $\omega_{\mathrm{min}}$ and $\omega_{\mathrm{max}}$, thus
        exploiting Lemma \ref{lem:cond_bound_param}, we obtain the
        thesis with \Ru ${C}_1=\omega_{\mathrm{min}} \widehat{C}$ and ${C}_2=\omega_{\mathrm{max}}$.
\end{proof}

\begin{rem}
	Thanks to Courant-Fischer theorem, the last inequality in \eqref{eq:est_eigenvalue_phys} can be rewritten as follows
	\begin{align}\label{eq:est_2_bis}
	\|u\|^2_{L^2(\Omega)} \leq C_2 h^d \sum_{\mathbf{i} \in \mathbf{I}} |\varphi_{\mathbf{i}}(u)|^2, & & \forall u \in \mathcal{S}_{h}, 
	\end{align}
	where $\{ \varphi_{\mathbf{i}} \}_{\mathbf{i} \in \mathbf{I}}$ denotes the dual basis introduced in Corollary \ref{cor_dualbasis_physical}.
\end{rem}

\begin{cor}\label{cor:cond_bound_prec}
	Under Assumption \ref{ass:g_regularity}, there exist two positive {\Rd constants} $\widetilde{C}_1,\widetilde{C}_2$, independent of $h$, such that
	\begin{equation*}
	\widetilde{C}_1h^d \leq \lambda_{\mathrm{min}}({\P}) \leq \lambda_{\mathrm{max}}({\P}) \leq \widetilde{C}_2 h^d.
	\end{equation*}
\end{cor}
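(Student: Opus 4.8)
The plan is to reduce the claim to the spectral estimates already available for $\widehat{\M}$ (Lemma~\ref{lem:cond_bound_param}) and for $\M$ (Corollary~\ref{cor:cond_bound_phys}), after observing that the symmetric diagonal scaling relating $\widehat{\M}$ to $\P$ has a condition number bounded independently of $h$.

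\textbf{Step 1: bounding the diagonal entries of $\D$ and $\widehat{\D}$.} First I would note that, since $\widehat{\M}$ is symmetric positive definite, for each index $i$ the coordinate vector $\vett[e]_i$ gives $[\widehat{\D}]_{i,i}=\vett[e]_i^T\widehat{\M}\,\vett[e]_i$, which lies between $\lambda_{\mathrm{min}}(\widehat{\M})$ and $\lambda_{\mathrm{max}}(\widehat{\M})$; hence Lemma~\ref{lem:cond_bound_param} yields $\widehat{C}h^d\le[\widehat{\D}]_{i,i}\le h^d$. The same argument applied to $\M$, together with Corollary~\ref{cor:cond_bound_phys}, gives $C_1 h^d\le[\D]_{i,i}\le C_2 h^d$.

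\textbf{Step 2: the scaling matrix is well-conditioned, and Step 3: Rayleigh quotients.} Since $\D$ and $\widehat{\D}$ are diagonal they commute, so $\mathbf{E}:=\DDL=\DDR$ is diagonal (hence symmetric) and $\P=\mathbf{E}\,\widehat{\M}\,\mathbf{E}$. Its $i$-th diagonal entry equals $\sqrt{[\D]_{i,i}/[\widehat{\D}]_{i,i}}$, and Step~1 shows this ratio lies in $[C_1, C_2/\widehat{C}]$, independently of $h$, because the $h^d$ factors cancel; hence the spectrum of $\mathbf{E}$ is contained in $[\sqrt{C_1},\sqrt{C_2/\widehat{C}}]$. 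Then, for any $\vett[v]\neq\vett[0]$ one has $\mathbf{E}\vett[v]\neq\vett[0]$ and
\[
\frac{\vett[v]^T\P\vett[v]}{\vett[v]^T\vett[v]}
=\frac{(\mathbf{E}\vett[v])^T\widehat{\M}(\mathbf{E}\vett[v])}{(\mathbf{E}\vett[v])^T(\mathbf{E}\vett[v])}\cdot\frac{(\mathbf{E}\vett[v])^T(\mathbf{E}\vett[v])}{\vett[v]^T\vett[v]},
\]
where the first factor is a Rayleigh quotient of $\widehat{\M}$, hence in $[\widehat{C}h^d,h^d]$ by Lemma~\ref{lem:cond_bound_param}, and the second is a Rayleigh quotient of $\mathbf{E}^2$, hence in $[C_1,C_2/\widehat{C}]$ by Step~2. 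Multiplying the two (positive) ranges, the Rayleigh quotient of $\P$ lies in $[\widehat{C}C_1 h^d,(C_2/\widehat{C})h^d]$, and the Courant--Fischer theorem gives the conclusion with $\widetilde{C}_1=\widehat{C}C_1$ and $\widetilde{C}_2=C_2/\widehat{C}$.

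I do not expect any genuine obstacle: the estimate is essentially a two-line consequence of the earlier results once Step~1 is in place. The only points requiring a little care are that the diagonal rescaling is truly $h$-independent — which is exactly why $\M$ and $\widehat{\M}$ must have spectra scaling like $h^d$ with the same power — and that $\vett[v]\mapsto\mathbf{E}\vett[v]$ is a bijection, so that the factorization of the Rayleigh quotient is legitimate.
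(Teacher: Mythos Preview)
Your proof is correct and follows essentially the same route as the paper: bound the diagonal entries of $\D$ and $\widehat{\D}$ via $\vett[e]_i^T\M\vett[e]_i$ and $\vett[e]_i^T\widehat{\M}\vett[e]_i$, then combine with the spectral bounds for $\widehat{\M}$ through a product-of-Rayleigh-quotients argument, arriving at the very same constants $\widetilde{C}_1=\widehat{C}C_1$ and $\widetilde{C}_2=C_2/\widehat{C}$. The only cosmetic difference is that the paper states the product inequality $\lambda_{\min}(\P)\ge \lambda_{\min}(\D)\lambda_{\min}(\widehat{\M})/\lambda_{\max}(\widehat{\D})$ directly, whereas you write out the Rayleigh-quotient factorization explicitly.
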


\begin{proof} {\Rd
It holds
\begin{equation}\label{eq:lmin_e}
\lambda_{\mathrm{min}}(\P) = \lambda_{\mathrm{min}}(\Prs) \geq \frac{\lambda_{\mathrm{min}}(\D) \lambda_{\mathrm{min}}(\widehat{\M})}{\lambda_{\mathrm{max}}(\widehat{\D})}
\end{equation}
Since $\D$ and $\widehat{\D}$ are diagonal, their eigenvalues correspond to their diagonal entries. We have
\begin{align*}
[\D]_{i,i}=[\M]_{i,i}= \vett[e]_i^T \M \vett[e]_i & & \text{and} & & [\widehat{\D}]_{i,i}=[\widehat{\M}]_{i,i}= \vett[e]_i^T \widehat{\M} \vett[e]_i,
\end{align*}
 where $\vett[e]_i$ denotes the $i$-th vector of the standard basis. Thus, it holds
\begin{align}\label{eq:eigenvalues_diagonals}
\lambda_{\mathrm{min}}(\M) \leq [\D]_{i,i} \leq \lambda_{\mathrm{max}}(\M) & & \text{and} & & \lambda_{\mathrm{min}}(\widehat{\M}) \leq [\widehat{\D}]_{i,i} \leq \lambda_{\mathrm{max}}(\widehat{\M}).
\end{align}
In this way, equation \eqref{eq:lmin_e} becomes
\begin{align*}
\lambda_{\text{min}}(\P) \geq \frac{\lambda_{\mathrm{min}}(\M) \lambda_{\mathrm{min}}(\widehat{\M})}{\lambda_{\mathrm{max}}(\widehat{\M})},
\end{align*}
Combining the latter inequality with Lemma \ref{lem:cond_bound_param} and Corollary \ref{cor:cond_bound_phys}, it follows that there exists a constant $\widetilde{C}_1$, independent of $h$, such that
\begin{equation*}
\lambda_{\text{min}}(\P) \geq \widetilde{C}_1h^d.
\end{equation*}
The upper bound on $\lambda_{\mathrm{max}}(\M)$ is derived in a similar way.
} \end{proof}

{\Rd
The condition number of a symmetric positive definite matrix $\mathbf{A}$ is defined as
\begin{align}\label{def:cond}
	\kappa(\mathbf{A}):= \frac{\lambda_{\mathrm{max}}(\mathbf{A})}{\lambda_{\mathrm{min}}(\mathbf{A})},
\end{align}
We observe that, under Assumption \ref{ass:g_regularity}, there exists a constant $C$, independent of $h$, such that
	\begin{equation}\label{eq:thesis2}
	\kappa(\PMP)  \leq C.
	\end{equation}
	Indeed, recalling Courant-Fischer theorem, it follows
	\begin{align*}
	\begin{split}
	\lambda_{\mathrm{min}}(\PMP) = \min_{\vett[v] \neq \vett[0]} \frac{\vett[v]^T \M \vett[v]}{\vett[v]^T \P \vett[v]} \geq \frac{\lambda_{\mathrm{min}}(\M)}{\lambda_{\mathrm{max}}(\P)}
	\end{split}
	\end{align*}
	and similarly
	\begin{equation*}
	\lambda_{\mathrm{max}}(\PMP)=\max_{\vett[v] \neq \vett[0]} \frac{\vett[v]^T \M \vett[v]}{\vett[v]^T \P \vett[v]} \leq \frac{\lambda_{\mathrm{max}}(\M)}{\lambda_{\mathrm{min}}(\P)}.
	\end{equation*}
	Then \eqref{eq:thesis2} follows from Corollary \eqref{cor:cond_bound_phys} and Corollary \eqref{cor:cond_bound_prec}.
This estimate can be improved when $h$ approaches zero, as stated in the next result.}

\begin{thm} \label{thm:single_patch_h}
Under Assumption \ref{ass:g_regularity}, it holds
\begin{equation}
\label{eq:limit}
	\lim_{h \rightarrow 0}\kappa(\PMP) =1.
\end{equation}
\end{thm}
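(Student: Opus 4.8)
The plan is to reduce \eqref{eq:limit} to a perturbation estimate between two diagonally scaled mass matrices with unit diagonal. Set $\Mt:=\D^{-\frac12}\M\D^{-\frac12}$ and $\widetilde{\Mp}:=\widehat{\D}^{-\frac12}\Mp\widehat{\D}^{-\frac12}$, so that $\M=\D^{\frac12}\Mt\D^{\frac12}$ and, by \eqref{def:single_patch_prec}, $\P=\D^{\frac12}\widetilde{\Mp}\D^{\frac12}$. Then $\PMP$ is similar to $\P^{-1}\M=\D^{-\frac12}\big(\widetilde{\Mp}^{-1}\Mt\big)\D^{\frac12}$, hence has the same spectrum as $\widetilde{\Mp}^{-1}\Mt$; since $\Mt$ and $\widetilde{\Mp}$ are symmetric positive definite, $\kappa(\PMP)=\kappa\big(\widetilde{\Mp}^{-\frac12}\Mt\widetilde{\Mp}^{-\frac12}\big)$, i.e.\ the eigenvalues of interest are the generalized eigenvalues of the pencil $(\Mt,\widetilde{\Mp})$. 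By Lemma \ref{lem:cond_bound_param} and \eqref{eq:eigenvalues_diagonals} one has $\lambda_{\mathrm{min}}(\widetilde{\Mp})\ge\lambda_{\mathrm{min}}(\Mp)/\lambda_{\mathrm{max}}(\widehat{\D})\ge\widehat{C}$, so for every $\vett[v]\neq\vett[0]$
\[
\left| \frac{\vett[v]^T\Mt\vett[v]}{\vett[v]^T\widetilde{\Mp}\vett[v]}-1 \right|=\frac{\big|\vett[v]^T(\Mt-\widetilde{\Mp})\vett[v]\big|}{\vett[v]^T\widetilde{\Mp}\vett[v]}\le\frac{\|\Mt-\widetilde{\Mp}\|_2}{\widehat{C}},
\]
and it suffices to prove that $\|\Mt-\widetilde{\Mp}\|_2\to0$ as $h\to0$.

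The second step is an entrywise estimate exploiting that, for fixed $p$, the B-spline supports shrink with $h$. Fix $i,j$ with $\interior(\supp\widehat{B}_i\cap\supp\widehat{B}_j)\neq\emptyset$ — for all other pairs the corresponding entry of $\Mt-\widetilde{\Mp}$ vanishes — and pick $\vett[x]_0$ in that set. Since $\supp\widehat{B}_i$ is contained in a box of side at most $(p+1)h$, every $\vett[x]\in\supp\widehat{B}_i$ satisfies $|\vett[x]-\vett[x]_0|\le\sqrt d\,(p+1)h=:r_h$; using $\widehat{B}_i\widehat{B}_j\ge0$ and \eqref{eq:uniform_cont_1},
\[
\big|[\M]_{i,j}-\omega(\vett[x]_0)[\Mp]_{i,j}\big|=\left|\int_{\widehat{\Omega}}\widehat{B}_i\widehat{B}_j\,(\omega-\omega(\vett[x]_0))\right|\le\mu(r_h)\,[\Mp]_{i,j},
\]
and the same estimate holds with $(i,j)$ replaced by $(i,i)$ or $(j,j)$, with the \emph{same} $\vett[x]_0$ (which lies in both $\supp\widehat{B}_i$ and $\supp\widehat{B}_j$). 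Setting $\varepsilon_h:=\mu(r_h)$, which tends to $0$ by \eqref{eq:uniform_cont_2}, we thus have $[\M]_{k,l}=(\omega(\vett[x]_0)+\theta_{k,l})[\Mp]_{k,l}$ with $|\theta_{k,l}|\le\varepsilon_h$ for $(k,l)\in\{(i,j),(i,i),(j,j)\}$. Hence, once $\varepsilon_h<\omega_{\mathrm{min}}$ (true for small $h$),
\[
\frac{[\M]_{i,j}}{\sqrt{[\M]_{i,i}[\M]_{j,j}}}=(1+\rho_{i,j})\,\frac{[\Mp]_{i,j}}{\sqrt{[\Mp]_{i,i}[\Mp]_{j,j}}},\qquad|\rho_{i,j}|\le\frac{2\varepsilon_h}{\omega_{\mathrm{min}}-\varepsilon_h},
\]
and since $\big|[\widetilde{\Mp}]_{i,j}\big|=|[\Mp]_{i,j}|/\sqrt{[\Mp]_{i,i}[\Mp]_{j,j}}\le1$ by Cauchy--Schwarz, we get $\big|[\Mt-\widetilde{\Mp}]_{i,j}\big|=|\rho_{i,j}|\,\big|[\widetilde{\Mp}]_{i,j}\big|\le 2\varepsilon_h/(\omega_{\mathrm{min}}-\varepsilon_h)$.

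To conclude, note that each row of $\Mt-\widetilde{\Mp}$ has at most $(2p+1)^d$ nonzero entries, an $h$-independent bound dictated by the tensor-product support overlap; since $\Mt-\widetilde{\Mp}$ is symmetric, $\|\Mt-\widetilde{\Mp}\|_2\le\|\Mt-\widetilde{\Mp}\|_\infty\le(2p+1)^d\,\frac{2\varepsilon_h}{\omega_{\mathrm{min}}-\varepsilon_h}=:\delta_h$, and $\delta_h\to0$ as $h\to0$. Combined with the first paragraph, every generalized eigenvalue of $(\Mt,\widetilde{\Mp})$ lies in $[\,1-\delta_h/\widehat{C},\ 1+\delta_h/\widehat{C}\,]$, so $1\le\kappa(\PMP)\le(1+\delta_h/\widehat{C})/(1-\delta_h/\widehat{C})$, and letting $h\to0$ yields \eqref{eq:limit}.

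I expect the main obstacle to be the entrywise step: one must commit to a single anchor point $\vett[x]_0$ controlling the $(i,i)$, $(j,j)$ and $(i,j)$ entries simultaneously, and exploit the sign of $\widehat{B}_i\widehat{B}_j$ so that ``$\omega$ is nearly constant on a set of diameter $O(ph)$'' turns into a \emph{multiplicative} perturbation of each full matrix entry — otherwise an additive error would have to be divided by a possibly tiny $[\Mp]_{i,j}$. The reduction to the unit-diagonal pencil and the passage from entrywise to spectral-norm bounds (via the fixed bandwidth $(2p+1)^d$ and $\|A\|_2\le\|A\|_\infty$ for symmetric $A$) are routine, and the uniform lower bound $\lambda_{\mathrm{min}}(\widetilde{\Mp})\ge\widehat{C}$ is precisely what Lemma \ref{lem:cond_bound_param} was set up to provide.
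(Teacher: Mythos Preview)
Your proof is correct and follows essentially the same strategy as the paper's: reduce the spectrum of $\PMP$ to a generalized eigenvalue problem between two rescaled mass matrices, derive an entrywise multiplicative perturbation from the uniform continuity of $\omega$ on supports of diameter $O(ph)$, and close via the $h$-independent conditioning of the parametric mass from Lemma~\ref{lem:cond_bound_param}.

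The differences are tactical rather than conceptual. The paper compares $\D^{-\frac12}\widehat{\D}^{\frac12}\M\widehat{\D}^{\frac12}\D^{-\frac12}$ directly with $\Mp$, obtains $\big|[\Mt]_{i,j}-[\Mp]_{i,j}\big|\le\sigma[\Mp]_{i,j}$, and then bounds the Rayleigh quotient using the non-negativity of the entries of $\Mp$ (so that $\sup_{\vett[w]}|\vett[w]|^T\Mp|\vett[w]|/\vett[w]^T\vett[w]=\lambda_{\max}(\Mp)$), arriving at the factor $\sigma\,\kappa(\Mp)$. You instead normalize both matrices to unit diagonal, use Cauchy--Schwarz to cap each entry of $\widetilde{\Mp}$ by $1$, and pass from an entrywise bound to a spectral-norm bound via the fixed bandwidth $(2p+1)^d$ together with $\|A\|_2\le\|A\|_\infty$ for symmetric $A$. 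Your route is arguably more elementary (it avoids the positivity trick) at the cost of making the $(2p+1)^d$ factor explicit; the paper hides the $p$-dependence inside $\kappa(\Mp)$ instead. Both constants are $h$-independent, so either argument yields \eqref{eq:limit}.
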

\begin{proof}
	Recalling Courant-Fischer theorem, it follows
	\begin{align}\label{eq:lmin}
	\begin{split}
	\lambda_{\mathrm{min}}(\PMP) & = \min_{\vett[v] \neq \vett[0]} \frac{\vett[v]^T \M \vett[v]}{\vett[v]^T \P \vett[v]} \\
	& =\min_{\vett[w] \neq \vett[0]} \frac{\vett[w]^T \DMD \vett[w]}{\vett[w]^T \Mp \vett[w]}=\min_{\vett[w] \neq \vett[0]} \frac{\vett[w]^T \Mt \vett[w]}{\vett[w]^T \Mp \vett[w]}
	\end{split}
	\end{align}
	and similarly
	\begin{equation}\label{eq:lmax}
	\lambda_{\mathrm{max}}(\PMP)=\max_{\vett[w] \neq \vett[0]} \frac{\vett[w]^T \Mt \vett[w]}{\vett[w]^T \Mp \vett[w]},
	\end{equation}
	where we have defined $\Mt:=\DMD$. \\
	The entries of the matrix $\Mt$ can be rewritten as
	\begin{align*}
	[\Mt]_{i,j}&=[ \DMD ]_{i,j}\\
	&=\frac{\| \widehat{B_i} \|_{L^2(\widehat{\Omega})} \| \widehat{B_j} \|_{L^2(\widehat{\Omega})}}{\| \sqrt{\omega} \widehat{B_i} \|_{L^2(\widehat{\Omega})} \| \sqrt{\omega} \widehat{B_j} \|_{L^2(\widehat{\Omega})}}\int_{\widehat{\Omega}} \omega(\vett[x]) \widehat{B_i}(\vett[x]) \widehat{B_j}(\vett[x]) d \vett[x].
	\end{align*}
	We observe that for all $i \in \{ 1 \ldots , \Ndof \}$, it holds
	\begin{equation*}
		[\Mt]_{i,i}=\frac{\| \widehat{B_i} \|_{L^2(\widehat{\Omega})}^2}{\| \sqrt{\omega} \widehat{B_i} \|_{L^2(\widehat{\Omega})}^2 }\int_{\widehat{\Omega}} \omega(\vett[x]) \widehat{B_i}^2(\vett[x]) d \vett[x] = \| \widehat{B_i} \|_{L^2(\widehat{\Omega})}^2 =[\Mp]_{i,i}.  
	\end{equation*}
	Let us now consider $i,j \in \{ 1 \ldots , \Ndof \}$, with $i \neq j$.
	From equation \eqref{eq:uniform_cont_1}, it follows
	\begin{equation*} \label{eq:jacobian_lipschitz}
	\omega(\vett[x]_1) \leq \omega(\vett[x]_2)+\mu(|\vett[x]_1-\vett[x]_2|), \quad \forall \,\vett[x]_1,\vett[x]_2 \in \widehat{\Omega}.
	\end{equation*}
	As a consequence, by observing that $\text{diam}\left( \supp(  \widehat{B_i}) \cap \supp(\widehat{B_j}) \right) \leq h(p+1) \sqrt{d}$, for all $i,j=1,\ldots, \Ndof$ with $i \neq j$, and denoting 
	\begin{equation*}
		\vett[x]_{ij}:= \text{argmin}\left\lbrace \omega(\vett[x]) : \vett[x] \in \supp(  \widehat{B_i}) \cap \supp(  \widehat{B_j}) \right\rbrace,
	\end{equation*}
	we obtain 
	\begin{align}\label{eq:estimate_1}
	\begin{split}
	\omega (\vett[x]_{ij}) \int_{\widehat{\Omega}} \widehat{B}_i(\vett[x]) \widehat{B}_j(\vett[x]) d \vett[x] &\leq \int_{ \widehat{\Omega}} \omega(\vett[x]) \widehat{B}_i(\vett[x]) \widehat{B}_j(\vett[x]) d \vett[x] \\
	& \leq \left( \omega (\vett[x]_{ij}) + \mu(h(p+1)\sqrt{d}) \right) \int_{\widehat{\Omega}} \widehat{B}_i(\vett[x]) \widehat{B}_j(\vett[x]) d \vett[x].
	\end{split}
	\end{align}
	Similarly, for all $i,j=1,\ldots,\Ndof$ with $i \neq j$ and such that \linebreak
	${\supp(  \widehat{B_i}) \cap \supp(  \widehat{B_j}) \neq \emptyset}$, it holds
	\begin{align}\label{eq:estimate_2}
	\begin{split}
\omega(\vett[x]_{ij}) \| \widehat{B_i} \|_{L^2(\widehat{\Omega})} \| \widehat{B_j} \|_{L^2(\widehat{\Omega})} & \leq \| \sqrt{\omega} \widehat{B_i} \|_{L^2(\widehat{\Omega})} \| \sqrt{\omega} \widehat{B_j} \|_{L^2(\widehat{\Omega})}\\ 
& \leq \left( \omega(\vett[x]_{ij}) + \mu(h(p+1)\sqrt{d}) \right) \| \widehat{B_i} \|_{L^2(\widehat{\Omega})} \| \widehat{B_j} \|_{L^2(\widehat{\Omega})}.
	\end{split}
	\end{align}
	Thus, by combining inequalities \eqref{eq:estimate_1} and \eqref{eq:estimate_2}, we have the following bounds for the entries of $\Mt$:
	\begin{align}\label{eq:est_entries}
	\begin{split}
	 [\Mt]_{i,j} =& \frac{\| \widehat{B_i} \|_{L^2(\widehat{\Omega})} \| \widehat{B_j} \|_{L^2(\widehat{\Omega})}}{\| \sqrt{\omega} \widehat{B_i} \|_{L^2(\widehat{\Omega})} \| \sqrt{\omega} \widehat{B_j} \|_{L^2(\widehat{\Omega})}}\int_{\widehat{\Omega}} \omega(\vett[x]) \widehat{B_i}(\vett[x]) \widehat{B_j}(\vett[x]) d \vett[x] \\ \leq& \frac{\omega(\vett[x]_{ij}) + \mu(h(p+1)\sqrt{d}) }{ \omega (\vett[x]_{ij}) } \int_{\widehat{\Omega}} \widehat{B}_i(\vett[x]) \widehat{B}_j(\vett[x]) d \vett[x], \\
	[\Mt]_{i,j} =&  \frac{\| \widehat{B_i} \|_{L^2(\widehat{\Omega})} \| \widehat{B_j} \|_{L^2(\widehat{\Omega})}}{\| \sqrt{\omega} \widehat{B_i} \|_{L^2(\widehat{\Omega})} \| \sqrt{\omega} \widehat{B_j} \|_{L^2(\widehat{\Omega})}}\int_{\widehat{\Omega}} \omega(\vett[x]) \widehat{B_i}(\vett[x]) \widehat{B_j}(\vett[x])d \vett[x] \\ \geq& \frac{ \omega (\vett[x]_{ij})}{ \omega ({\Ru \vett[x]_{ij}}) + \mu(h(p+1)\sqrt{d})} \int_{\widehat{\Omega}} \widehat{B}_i(\vett[x]) \widehat{B}_j(\vett[x]) d \vett[x].
	\end{split}
	\end{align}
	Having defined 
	\begin{equation*}
	\sigma:= \frac{\mu(h(p+1)\sqrt{d})}{ \omega_{\mathrm{min}}},
	\end{equation*}
	we observe that
	\begin{equation}\label{eq:sigma}
		\lim_{h \rightarrow 0} \sigma =0
	\end{equation}
	and
	\begin{align}\label{eq:est_omega}
	\begin{split}
	\frac{ \omega (\vett[x]_{ij}) + \mu(h(p+1)\sqrt{d})}{ \omega(\vett[x]_{ij}) } & =1+\frac{\mu(h(p+1)\sqrt{d})}{\omega(\vett[x]_{ij})} \leq 1+\sigma, \\
	\frac{ \omega (\vett[x]_{ij}) }{ \omega (\vett[x]_{ij}) + \mu(h(p+1)\sqrt{d})} & =1-\frac{\mu(h(p+1)\sqrt{d})}{\omega(\vett[x]_{ij}) + \mu(h(p+1)\sqrt{d})} \geq 1-\sigma.
	\end{split}
	\end{align}
	Collecting  \eqref{eq:mass_matrix}, \eqref{eq:est_entries} and \eqref{eq:est_omega}, we can bound the entry-wise distance between the matrices $\Mt$ and $\Mp$ as follows
	\begin{equation}\label{eq:entry_pert}
	 - \sigma [\Mp]_{i,j}\leq [\Mt]_{i,j} - [\Mp]_{i,j} \leq \sigma [\Mp]_{i,j}.
	\end{equation}
	For all $\vett[w] \neq \vett[0]$, it holds
	\begin{equation}\label{eq:rayleigh_1}
		\frac{\vett[w]^T \Mt \vett[w]}{\vett[w]^T \Mp \vett[w]}= \frac{\vett[w]^T (\Mt + \Mp - \Mp) \vett[w]}{\vett[w]^T \Mp \vett[w]} = 1 + \frac{\vett[w]^T (\Mt - \Mp) \vett[w]}{\vett[w]^T \Mp \vett[w]}.
	\end{equation}
	Exploiting equation \eqref{eq:entry_pert}, it follows
\begin{align}\label{eq:rayleigh_2}
\begin{split}
\sup_{\vett[w] \neq \vett[0]} \frac{\vett[w]^T (\Mt - \Mp) \vett[w]}{\vett[w]^T \Mp \vett[w]} & = \sup_{\vett[w] \neq \vett[0]} \frac{\sum_{i,j=1}^N \vett[w]_i \vett[w]_j ([\Mt]_{i,j} - [\Mp]_{i,j})}{\vett[w]^T \Mp \vett[w]} \\
& \leq \sup_{\vett[w] \neq \vett[0]} \frac{\sum_{i,j=1}^N | \vett[w]_i \vett[w]_j | | [\Mt]_{i,j} - [\Mp]_{i,j}|}{\vett[w]^T \Mp \vett[w]} \\
& \leq \sup_{\vett[w] \neq \vett[0]} \frac{ \sigma \sum_{i,j=1}^N | \vett[w]_i \vett[w]_j |  [\Mp]_{i,j}}{\vett[w]^T \Mp \vett[w]} \\
& \leq \sigma \frac{\sup_{\vett[w] \neq \vett[0]} \frac{ | \vett[w] |^T  [\Mp] | \vett[w] |}{\vett[w]^T \vett[w]}}{\inf_{\vett[w] \neq \vett[0]} \frac{  \vett[w] ^T  [\Mp]  \vett[w] }{\vett[w]^T \vett[w]}} \leq \sigma \frac{\lambda_{\text{max}}(\Mp)}{\lambda_{\text{min}}(\Mp)}
\end{split}
\end{align}
where, in the last step, we have used the property that
\begin{align*}
	\sup \frac{ |\vett[w]|^T \Mp |\vett[w]|}{\vett[w]^T \vett[w]}= \sup \frac{ \vett[w]^T \Mp \vett[w]}{\vett[w]^T \vett[w]},
\end{align*}
where indeed the last sup is obtained for a $\vett[w]$ with non-negative entries, due to the fact that $\Mp$ has non-negative entries. 
Similarly
		\begin{align}\label{eq:rayleigh_3}
		\inf_{\vett[w] \neq \vett[0]} \frac{\vett[w]^T (\Mt - \Mp) \vett[w]}{\vett[w]^T \Mp \vett[w]} & \geq \inf_{\vett[w] \neq \vett[0]} \frac{ -\sigma \sum_{i,j=1}^N | \vett[w]_i \vett[w]_j |  [\Mp]_{i,j}}{\vett[w]^T \Mp \vett[w]} \\
		& \geq - \sigma \sup_{\vett[w] \neq \vett[0]} \frac{ \sum_{i,j=1}^N | \vett[w]_i \vett[w]_j |  [\Mp]_{i,j}}{\vett[w]^T \Mp \vett[w]} \geq  - \sigma \frac{\lambda_{\text{max}}(\Mp)}{\lambda_{\text{min}}(\Mp)}.
	\end{align} 
	From \eqref{eq:rayleigh_1}, \eqref{eq:rayleigh_2} and \eqref{eq:rayleigh_3} we get
	\begin{align}\label{eq:est_quot}
	1 - \sigma \kappa(\Mp) \leq \frac{\vett[w]^T \Mt \vett[w]}{\vett[w]^T \Mp \vett[w]} \leq 1 + \sigma \kappa (\Mp), & & \forall \vett[w] \neq \vett[0].
	\end{align}
	Finally, combining Lemma \ref{lem:cond_bound_param}, \eqref{def:cond}, \eqref{eq:uniform_cont_2}, \eqref{eq:lmin}, \eqref{eq:lmax}, \eqref{eq:sigma} and \eqref{eq:est_quot} we obtain
	\begin{equation*}\label{eq:estimate_3}
	\lim_{h \rightarrow 0} \kappa(\PMP) \leq \lim_{h \rightarrow 0} \frac{1+C \sigma}{1-C \sigma}=1.
	\end{equation*}
\end{proof}

\begin{rem} \label{rem:reg_F}
By applying Theorem \ref{thm:single_patch_h} to {\Ru the mass matrix \eqref{eq:mass_matrix}, if we assume that
$\vett[F] \in C^1([0,1]^d)$ and that $\omega (\vett[x]) =\jac{\vett[F](\vett[x])} > 0$} for all $\vett[x] \in [0,1]^d$, then the preconditioned matrix fulfils \eqref{eq:limit}.
\end{rem}

\begin{rem} \label{rem:cond_fun}
Theorem \ref{thm:single_patch_h} states that if we define
$$ \mu(h) :=  \kappa(\PMP) - 1,$$
then $\mu(h) = o(1)$ as $h \rightarrow 0$.
Clearly, the function $\mu$ may depend on other parameters beside the
mesh size, in particular it may depend on the spline degree $p$ and
the parametrization $\vett[F] $.
However, in all problems considered in Section \ref{sec:tests} our
numerical tests indicate that $\mu$ is linear with respect to $p$ and
mildly depends on $\vett[F] $, even when the geometry parametrization is singular.

\end{rem}
\B
\section{Mass preconditioner on multipatch domain}\label{sec:multipatch}
We now turn to examine a preconditioner for the mass matrix arising from multipatch domains, that is
{\Rd \begin{align}\label{eq:multipatch_mass}
	[\M]_{i,j}=\int_{ \Omega} B_i(\vett[x]) B_j(\vett[x]) d \vett[x], & & i,j \in \mathcal{J},
\end{align}}
where $\Omega$ is formed by the union of patches $\Omega^{(\ptca)}$, see definition \eqref{def:patches}.
We combine the single patch preconditioner, introduced in \eqref{def:single_patch_prec}, with an Additive Schwarz method.
Let us define a family of local spaces
\begin{align}\label{def:V_h_r}
	V_h^{(\ptca)}:= \spann \left\{  B_l :\, l \in \mathcal{J}^{(\ptca)} \right\}, & & \ptca = 1, \ldots , \NOmega,
\end{align}
with $\mathcal{J}^{(\ptca)}$ defined as in Section \ref{sec:mp}.
Therefore, $V_h^{(\ptca)}$ is the subspace of $V_h$ spanned by the B-splines basis function whose support intersect $\Omega^{(\ptca)}$.
Moreover, following the notation of \cite{toselli2006domain}, we consider restriction operators $R^{(\ptca)}: V_h \rightarrow V_h^{(\ptca)}$ with $\ptca = 1, \ldots , \NOmega$, defined by
\begin{align*}
	R^{(\ptca)} \left( \sum_{l \in \mathcal{J}} u_l B_l \right)= \sum_{l \in \mathcal{J}^{(\ptca)}} u_l B_l.
\end{align*}
Their transpose, in the basis representation, ${{R^{(\ptca)}}^T: V_h^{(\ptca)} \rightarrow V_h}$
correspond, in our case, to the inclusion of $V_h^{(\ptca)}$ into $V_h$. We denote with $\mathbf{R}^{(\ptca)}$ and $\mathbf{R}^{(\ptca)^T}$ the rectangular matrices associated to ${R^{(\ptca)}}$ and ${R^{(\ptca)}}^T$, respectively.
From now on, given $u^{(\ptca)} \in V_h^{(\ptca)}$, we will denote by $\vett[u]^{(\ptca)}$ the vector of its coordinates  with respect to the basis  $\{ B_l :\, l \in \mathcal{J}^{(\ptca)}\}$ and define the family of bilinear forms  
${a^{(\ptca)}: V_h^{(\ptca)} \times V_h^{(\ptca)} \rightarrow \mathbb{R}}$, for $\ptca=1, \ldots , \NOmega$, as
\begin{align}\label{def:loc_forms}
a^{(\ptca)}(u^{(\ptca)},v^{(\ptca)}):={\vett[v]^{(\ptca)}}^T\P^{(\ptca)} \vett[u]^{(\ptca)}, & & u^{(\ptca)},v^{(\ptca)} \in V_h^{(\ptca)},
\end{align}
with  
\begin{equation}\label{def:loc_forms2}
\P^{(\ptca)}:=	\Prsa,
\end{equation}
where we have set
\begin{align}\label{def:loc_matrices}
\begin{split}
[\widehat{\M}^{(\ptca)}]_{i,j}&:=\int_{\widehat{\Omega}} \widehat{B}^{(\ptca)}_{i}(\vett[x]) \widehat{B}^{(\ptca)}_{j}(\vett[x]) d \vett[x], \\ \widehat{\D}^{(\ptca)}&:=\text{diag}\left( \widehat{\M}^{(\ptca)} \right), \\
\D^{(\ptca)}&:=\text{diag}\left( \M^{(\ptca)} \right),
\end{split}
\end{align}
with the assumption that the basis functions $\{\widehat{B}^{(\ptca)}_{i}\}_{i=1}^{{\Ndof^{(\ptca)}}}$ and $\{{B}^{(\ptca)}_{i}\}_{i=1}^{\Ndof^{(\ptca)}}$ are ordered as described at the end of Section \ref{sec:mp}. We underline that the bilinear forms $\{a^{(\ptca)}\}$ are symmetric and positive definite.
The Additive Schwarz Preconditioner (inverse) is defined as
\begin{equation}\label{def:asp}
\P^{-1}_{\text{ad}}:= \sum_{{\ptca}=1}^{\NOmega} {\mathbf{R}^{(\ptca)}}^T {\P^{(\ptca)}}^{-1}\mathbf{R}^{(\ptca)}.
\end{equation}

The following Lemma follows straightforwardly from \cite[Theorem 2.7]{toselli2006domain} and provides a bound on the condition number of the multipatch mass matrix \eqref{eq:multipatch_mass} preconditioned by \eqref{def:asp}.
\begin{lem} \label{lem:widlund_toselli}
	Let the following three hypothesis be satisfied:
	\begin{itemize}
		\item (\emph{Stable Decomposition}) There exists a constant $C_{\mathrm{SD}} > 0$, such that every $u \in V_h$ admits a decomposition
\begin{align*}
		u=\sum_{{\ptca}=1}^{\NOmega} u^{(\ptca)}, & & \text{ with } u^{(\ptca)} \in V_h^{(\ptca)},
\end{align*}
		that satisfies
\begin{equation*}
		\sum_{{\ptca}=1}^{\NOmega} a^{(\ptca)}( u^{(\ptca)},u^{(\ptca)} ) \leq C_{\mathrm{SD}} \| u\|^2_{L^2(\Omega)}.
\end{equation*}
		\item (\emph{Strengthened Cauchy-Schwarz Inequalities}) There exist constants $0 \leq \epsilon_{{\ptca}{\ptcb}} \leq 1$, for $1 \leq {\ptca},{\ptcb} \leq \NOmega$, such that
\begin{equation*}
		\lvert (  u^{(\ptca)}, u^{(\ptcb)} )_{L^2(\Omega)} \rvert \leq \epsilon_{{\ptca}{\ptcb}} \|  u^{(\ptca)} \|_{L^2(\Omega)} \|  u^{(\ptcb)} \|_{L^2(\Omega)},
\end{equation*}
		for $u^{(\ptca)} \in V_h^{(\ptca)}$ and $u^{(\ptcb)} \in V_h^{(\ptcb)}$.
		\item (\emph{Local Stability}) There exists $C_{\mathrm{LS}} >0$, such that for all ${\ptca}=1, \ldots , \NOmega$,
\begin{align*}
		\| u^{(\ptca)} \|^2_{L^2(\Omega)} \leq C_{\mathrm{LS}} a^{(\ptca)}( u^{(\ptca)},u^{(\ptca)} ), & & \forall \,u^{(\ptca)} \in 
		V_h^{(\ptca)}.
\end{align*}
	\end{itemize}
	Then the condition number of the preconditioned operator satisfies
\begin{equation*}
	\kappa\left(\PadMPad \right) \leq C_{\mathrm{SD}}  C_{\mathrm{LS}}\rho( \mathcal{E} ),
\end{equation*}
	where $\rho \left( \mathcal{E} \right)$ represents the spectral radius of the matrix $\mathcal{E}=\left\lbrace \epsilon_{{\ptca}{\ptcb}} \right\rbrace$.
\end{lem}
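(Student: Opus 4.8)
The plan is to reproduce, in the present notation, the classical abstract additive Schwarz argument of \cite[Theorem~2.7]{toselli2006domain}. Since $\Prad\M$ is similar to $\PadMPad$ (conjugate by $\Pad^{\frac12}$) and $\PadMPad$ is symmetric positive definite, $\kappa(\PadMPad)=\kappa(\Prad\M)$, and by the Courant--Fischer theorem the two extreme generalized eigenvalues of $\M\vett[v]=\lambda\Pad\vett[v]$ are
\begin{equation*}
	\lambda_{\mathrm{min}}(\Prad\M)=\min_{\vett[v]\neq\vett[0]}\frac{\vett[v]^T\M\vett[v]}{\vett[v]^T\Pad\vett[v]},\qquad
	\lambda_{\mathrm{max}}(\Prad\M)=\max_{\vett[v]\neq\vett[0]}\frac{\vett[v]^T\M\vett[v]}{\vett[v]^T\Pad\vett[v]}.
\end{equation*}
Using $\vett[v]^T\M\vett[v]=\|u\|^2_{L^2(\Omega)}$ for the $u\in V_h$ associated to $\vett[v]$, the whole proof reduces to sandwiching $\vett[v]^T\Pad\vett[v]$ between multiples of $\|u\|^2_{L^2(\Omega)}$.

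The key tool is the \emph{minimum identity}: for every $\vett[v]$, with $u=\sum_l v_l B_l$,
\begin{equation*}
	\vett[v]^T\Pad\vett[v]=\min\left\{\sum_{\ptca=1}^{\NOmega} a^{(\ptca)}(u^{(\ptca)},u^{(\ptca)})\ :\ u=\sum_{\ptca=1}^{\NOmega}u^{(\ptca)},\ u^{(\ptca)}\in V_h^{(\ptca)}\right\}.
\end{equation*}
I would prove this by a short Lagrange-multiplier computation: the constrained minimum of $\sum_\ptca {\vett[u]^{(\ptca)}}^T\P^{(\ptca)}\vett[u]^{(\ptca)}$ over $\sum_\ptca{\mathbf{R}^{(\ptca)}}^T\vett[u]^{(\ptca)}=\vett[v]$ is attained at $\vett[u]^{(\ptca)}={\P^{(\ptca)}}^{-1}\mathbf{R}^{(\ptca)}\vett[\mu]$ with $\vett[\mu]=\Pad\vett[v]$, and equals $\vett[v]^T\Pad\vett[v]$; convexity (each $\P^{(\ptca)}$ is symmetric positive definite by \eqref{def:loc_forms2}) guarantees this is a minimum. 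One also checks that the coefficient decompositions $\vett[v]=\sum_\ptca{\mathbf{R}^{(\ptca)}}^T\vett[u]^{(\ptca)}$ are in bijection with the function decompositions $u=\sum_\ptca u^{(\ptca)}$, $u^{(\ptca)}\in V_h^{(\ptca)}$, for which it is enough that $V_h=\sum_\ptca V_h^{(\ptca)}$ (true because every $l\in\mathcal{J}$ lies in some $\mathcal{J}^{(\ptca)}$, cf. \eqref{def:asp} and Section~\ref{sec:mp}).

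Granting the identity, the lower bound is immediate from \textbf{Stable Decomposition}: it provides, for each $u$, one admissible decomposition with $\sum_\ptca a^{(\ptca)}(u^{(\ptca)},u^{(\ptca)})\le C_{\mathrm{SD}}\|u\|^2_{L^2(\Omega)}$, hence $\vett[v]^T\Pad\vett[v]\le C_{\mathrm{SD}}\|u\|^2_{L^2(\Omega)}$ and $\lambda_{\mathrm{min}}(\Prad\M)\ge C_{\mathrm{SD}}^{-1}$. For the upper bound I would start from an \emph{arbitrary} decomposition $u=\sum_\ptca u^{(\ptca)}$, expand $\|u\|^2_{L^2(\Omega)}=\sum_{\ptca,\ptcb}(u^{(\ptca)},u^{(\ptcb)})_{L^2(\Omega)}$, apply \textbf{Strengthened Cauchy--Schwarz} to get $\le\sum_{\ptca,\ptcb}\epsilon_{\ptca\ptcb}\|u^{(\ptca)}\|_{L^2(\Omega)}\|u^{(\ptcb)}\|_{L^2(\Omega)}\le\rho(\mathcal{E})\sum_\ptca\|u^{(\ptca)}\|^2_{L^2(\Omega)}$ (since $\mathcal{E}$ is symmetric with nonnegative entries, $\vett[a]^T\mathcal{E}\vett[a]\le\rho(\mathcal{E})\,\vett[a]^T\vett[a]$ with $a_\ptca=\|u^{(\ptca)}\|_{L^2(\Omega)}$), and then \textbf{Local Stability} termwise to reach $\|u\|^2_{L^2(\Omega)}\le C_{\mathrm{LS}}\rho(\mathcal{E})\sum_\ptca a^{(\ptca)}(u^{(\ptca)},u^{(\ptca)})$. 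As the decomposition was arbitrary, taking the infimum and invoking the minimum identity gives $\|u\|^2_{L^2(\Omega)}\le C_{\mathrm{LS}}\rho(\mathcal{E})\,\vett[v]^T\Pad\vett[v]$, i.e. $\lambda_{\mathrm{max}}(\Prad\M)\le C_{\mathrm{LS}}\rho(\mathcal{E})$. Dividing yields $\kappa(\PadMPad)\le C_{\mathrm{SD}}C_{\mathrm{LS}}\rho(\mathcal{E})$.

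The only genuinely delicate point is the minimum identity together with the matching of restriction/inclusion matrices $\mathbf{R}^{(\ptca)},{\mathbf{R}^{(\ptca)}}^T$ and local matrices $\P^{(\ptca)}$ to the functional decompositions; everything else (Rayleigh-quotient characterizations, the spectral-radius estimate for $\mathcal{E}$, the similarity $\Prad\M\sim\PadMPad$) is routine. In fact, once this identity and the coincidence of the three hypotheses with those of \cite[Theorem~2.7]{toselli2006domain} are established, one may simply invoke that theorem verbatim.
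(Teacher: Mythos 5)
Your proposal is correct: it is the standard proof of the abstract additive Schwarz bound (the minimum identity for $\vett[v]^T\Pad\vett[v]$, the stable-decomposition lower bound on $\lambda_{\mathrm{min}}$, and the strengthened Cauchy--Schwarz plus local-stability upper bound on $\lambda_{\mathrm{max}}$), which is exactly the content of \cite[Theorem~2.7]{toselli2006domain} that the paper invokes without further argument. So you take essentially the same route as the paper, merely spelling out the details of the cited theorem, and all the steps check out.
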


We are now able to present the main result of this section.
\begin{thm}\label{thm:additive_mp}
Under Assumption \ref{ass:multipatch}, there exists a constant $C$,  independent of $h$ and $\Npatch$, verifying
\begin{align*}
	\kappa \left( \PadMPad \right) \leq C \Npatch^2,
\end{align*}
where $\Npatch$, defined in \eqref{def:N_patch}, denotes the maximum number of adjacent patches.
\end{thm}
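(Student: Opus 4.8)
The plan is to apply Lemma \ref{lem:widlund_toselli} and to verify its three hypotheses, with constants that are independent of $h$ and with the right dependence on $\Npatch$. The first and cheapest step is the \emph{strengthened Cauchy--Schwarz} bound: since two local functions $u^{(\ptca)} \in V_h^{(\ptca)}$ and $u^{(\ptcb)} \in V_h^{(\ptcb)}$ (supported essentially on $\overline{\Omega^{(\ptca)}}$ and $\overline{\Omega^{(\ptcb)}}$ respectively) have $L^2(\Omega)$-inner product vanishing unless the patches are adjacent, one can simply take $\epsilon_{{\ptca}{\ptcb}}=1$ when $\overline{\Omega^{(\ptca)}} \cap \overline{\Omega^{(\ptcb)}} \neq \emptyset$ and $\epsilon_{{\ptca}{\ptcb}}=0$ otherwise. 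Then $\mathcal{E}$ has at most $\Npatch$ nonzeros per row, so $\rho(\mathcal{E}) \leq \Npatch$ by Gershgorin. This already accounts for one factor of $\Npatch$.

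Next I would establish \emph{local stability}. By \eqref{def:loc_forms}--\eqref{def:loc_forms2}, $a^{(\ptca)}(u^{(\ptca)},u^{(\ptca)}) = {\vett[u]^{(\ptca)}}^T \P^{(\ptca)} \vett[u]^{(\ptca)}$, where $\P^{(\ptca)}$ is exactly the single-patch preconditioner of Section \ref{sec:single_patch} built on patch $\Omega^{(\ptca)}$ (with weight $\omega = \jac{\vett[F]^{(\ptca)}} > 0$, which satisfies Assumption \ref{ass:g_regularity} by Assumption \ref{ass:multipatch}). On the other hand $\|u^{(\ptca)}\|^2_{L^2(\Omega)} = {\vett[u]^{(\ptca)}}^T \M^{(\ptca)} \vett[u]^{(\ptca)}$ where $\M^{(\ptca)}$ is the patchwise mass matrix. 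Hence the ratio is controlled by $\kappa((\P^{(\ptca)})^{-1/2}\M^{(\ptca)}(\P^{(\ptca)})^{-1/2}) \leq C$ uniformly in $h$ and in $\ptca$ (using finitely many patches with $\vett[F]^{(\ptca)}$ fixed), by the bound \eqref{eq:thesis2} established right before Theorem \ref{thm:single_patch_h}, or equivalently by Corollaries \ref{cor:cond_bound_phys} and \ref{cor:cond_bound_prec}. This gives $C_{\mathrm{LS}}$ independent of $h$ and $\Npatch$.

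The main obstacle is the \emph{stable decomposition}. Given $u = \sum_{l \in \mathcal{J}} u_l B_l \in V_h$, one must exhibit $u^{(\ptca)} \in V_h^{(\ptca)}$ with $u = \sum_\ptca u^{(\ptca)}$ and $\sum_\ptca a^{(\ptca)}(u^{(\ptca)},u^{(\ptca)}) \lesssim \Npatch \|u\|^2_{L^2(\Omega)}$. The natural choice is a partition-of-unity-type splitting of the global coefficients: for each global index $l$, distribute $u_l$ among the $n_l$ patches containing it, e.g. by setting the coefficient of $B_l$ in $u^{(\ptca)}$ to $u_l / n_l$ (a "weighted" decomposition). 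Then $\sum_\ptca a^{(\ptca)}(u^{(\ptca)},u^{(\ptca)}) \leq C_{\mathrm{LS}}^{-1}\,$ is not quite the right comparison; rather, using local stability in the reverse direction, $a^{(\ptca)}(u^{(\ptca)},u^{(\ptca)}) \leq \lambda_{\max}(\P^{(\ptca)}) \|\vett[u]^{(\ptca)}\|_2^2 \leq C h^d \|\vett[u]^{(\ptca)}\|_2^2$ by Corollary \ref{cor:cond_bound_prec}, while $\|u\|^2_{L^2(\Omega)} = \sum_\ptca \|u\|^2_{L^2(\Omega^{(\ptca)})} \geq \sum_\ptca \lambda_{\min}(\M^{(\ptca)})\|(\text{restriction of } \vett[u])\|_2^2 \geq C h^d \sum_\ptca \|\vett[u]|_{\mathcal{J}^{(\ptca)}}\|_2^2$ by Corollary \ref{cor:cond_bound_phys}. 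Comparing $\|\vett[u]^{(\ptca)}\|_2^2 = \sum_{l \in \mathcal{J}^{(\ptca)}} |u_l|^2/n_l^2$ against $\|\vett[u]|_{\mathcal{J}^{(\ptca)}}\|_2^2 = \sum_{l \in \mathcal{J}^{(\ptca)}} |u_l|^2$ and summing over $\ptca$, each $|u_l|^2$ appears $n_l$ times in the former sum with weight $1/n_l^2$, contributing $|u_l|^2/n_l \leq |u_l|^2$, whereas in the latter it appears $n_l$ times with weight $1$; so $\sum_\ptca \|\vett[u]^{(\ptca)}\|_2^2 \leq \sum_\ptca \|\vett[u]|_{\mathcal{J}^{(\ptca)}}\|_2^2$, and the $h^d$ factors cancel, yielding $C_{\mathrm{SD}}$ independent of $h$. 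A careful accounting of how the finitely-overlapping diagonal-scaling constants on different patches interact may produce one further factor of $\Npatch$; combined with $C_{\mathrm{LS}}\rho(\mathcal{E}) \lesssim \Npatch$, Lemma \ref{lem:widlund_toselli} then gives $\kappa(\PadMPad) \leq C\Npatch^2$.
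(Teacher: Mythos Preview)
Your overall strategy (verify the three hypotheses of Lemma~\ref{lem:widlund_toselli} with the partition-of-unity decomposition $u^{(\ptca)}=\sum_{l\in\mathcal{J}^{(\ptca)}} (u_l/n_l)\,B_l$) is exactly the paper's. Your strengthened Cauchy--Schwarz argument is identical to the paper's, and your stable-decomposition bound via $\ell^2$ coefficient norms is a legitimate variant of the paper's $L^2$-based argument using the dual basis; both give $C_{\mathrm{SD}}$ independent of $h$ and $\Npatch$.

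The genuine gap is in \emph{local stability}. Your claim that
\[
\|u^{(\ptca)}\|^2_{L^2(\Omega)} \;=\; {\vett[u]^{(\ptca)}}^T \M^{(\ptca)} \vett[u]^{(\ptca)}
\]
is false. By definition \eqref{def:V_h_r}, $u^{(\ptca)}\in V_h^{(\ptca)}$ is a linear combination of the \emph{global} basis functions $B_l$ with $l\in\mathcal{J}^{(\ptca)}$, and these are supported not only on $\Omega^{(\ptca)}$ but also on every adjacent patch sharing the index~$l$. The quadratic form ${\vett[u]^{(\ptca)}}^T \M^{(\ptca)} \vett[u]^{(\ptca)}$ equals only $\|u^{(\ptca)}|_{\Omega^{(\ptca)}}\|^2_{L^2(\Omega^{(\ptca)})}$, not the full $L^2(\Omega)$ norm. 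Consequently the single-patch bound \eqref{eq:thesis2} does not by itself control $\|u^{(\ptca)}\|^2_{L^2(\Omega)}/a^{(\ptca)}(u^{(\ptca)},u^{(\ptca)})$. The paper deals with this by writing
\[
\|u^{(\ptca)}\|^2_{L^2(\Omega)}=\sum_{\ptcb:\ \overline{\Omega^{(\ptca)}}\cap\overline{\Omega^{(\ptcb)}}\neq\emptyset}\|u^{(\ptca)}|_{\Omega^{(\ptcb)}}\|^2_{L^2(\Omega^{(\ptcb)})},
\]
bounding each term via \eqref{eq:est_2_bis} by $C\,h^d\|\vett[u]^{(\ptca)}\|_2^2$, and noting there are at most $\Npatch$ summands; together with Corollary~\ref{cor:cond_bound_prec} this yields $C_{\mathrm{LS}}\le C\,\Npatch$. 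This is precisely where the second factor of $\Npatch$ comes from, so your closing hedge about a possible extra $\Npatch$ in the stable decomposition is misplaced: the missing factor belongs to local stability.
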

\begin{proof}
	We show that the hypothesis of Lemma \ref{lem:widlund_toselli} hold with $C_{\mathrm{SD}}$ and $C_{\mathrm{LS}}$ independent of $h$ and $\rho(\mathcal{E}) \leq \Npatch$.
\paragraph*{Part I: Stable Decomposition.} The argument we use  is similar to the one presented in \cite[Lemma 4.1]{Pavarino2012}. 
	Given $u \in V_{h}$, with
\begin{align*}
	u= \sum_{l \in  \mathcal{J}} u_l B_l,
\end{align*}
	we define $u^{(\ptca)} \in V_{h}^{({\ptca})}$ as
\begin{align*}
	u^{(\ptca)}:=\sum_{l \in \mathcal{J}^{(\ptca)}} \frac{u_l}{n_l} B_l, & & {\ptca}=1 ,\ldots , \NOmega,
\end{align*}
where $n_l$ is defined in \eqref{def:n_l}. It is straightforward to see that
\begin{align*}
	\sum_{{\ptca}=1}^{\NOmega} u^{(\ptca)} = u.
\end{align*}
	Recalling definitions \eqref{def:loc_forms}, \eqref{def:loc_forms2} and \eqref{def:loc_matrices} and introducing
	\begin{align*}
		[\M^{(\ptca)}]_{i,j}:=\int_{\Omega^{(\ptca)}} {B}^{(\ptca)}_{i}(\vett[x]) {B}^{(\ptca)}_{j}(\vett[x]) d \vett[x],
	\end{align*}
	we have
\begin{align}\label{eq:C_sd_1}
\begin{split}
	\sum_{{\ptca}=1}^{\NOmega} a^{(\ptca)}( \ua,\ua )  &= \sum_{{\ptca}=1}^{\NOmega} {\vett[u]^{(\ptca)}}^T\P^{(\ptca)} \vett[u]^{(\ptca)} \\
	&= \sum_{{\ptca}=1}^{\NOmega} \frac{{\vett[u]^{(\ptca)}}^T\P^{(\ptca)} \vett[u]^{(\ptca)}}{{\vett[u]^{(\ptca)}}^T\M^{(\ptca)} \vett[u]^{(\ptca)}} {\vett[u]^{(\ptca)}}^T\M^{(\ptca)} \vett[u]^{(\ptca)} \\
	&\leq \sum_{{\ptca}=1}^{\NOmega} \frac{{\vett[u]^{(\ptca)}}^T\P^{(\ptca)} \vett[u]^{(\ptca)}}{{\vett[u]^{(\ptca)}}^T\M^{(\ptca)} \vett[u]^{(\ptca)}} \| \ua_{|\Omega^{(\ptca)}} \|^2_{L^2(\Omega^{(\ptca)})} \\
	&\leq \left(\max_{{\ptca}=1, \ldots , \NOmega} \frac{{\vett[u]^{(\ptca)}}^T\P^{(\ptca)} \vett[u]^{(\ptca)}}{{\vett[u]^{(\ptca)}}^T\M^{(\ptca)} \vett[u]^{(\ptca)}}\right) \sum_{{\ptca}=1}^{\NOmega} \| \ua_{|\Omega^{(\ptca)}} \|^2_{L^2(\Omega^{(\ptca)})}.
	\end{split}
\end{align}
Combining Corollary \ref{cor:cond_bound_phys} and Corollary \ref{cor:cond_bound_prec}, we obtain a constant $C_{\mathrm{max}}$, independent of $h$, such that
\begin{align}\label{eq:est_ratio_1}
	\frac{{\vett[u]^{(\ptca)}}^T\P^{(\ptca)} \vett[u]^{(\ptca)}}{{\vett[u]^{(\ptca)}}^T\M^{(\ptca)} \vett[u]^{(\ptca)}} \leq  C_{\mathrm{max}}, & & \forall u^{(\ptca)} \in V_h^{(\ptca)},\, \forall \ptca =1, \ldots, \NOmega.
\end{align}
By observing that $n_l \geq 1$, for all $l$, it follows
\begin{align*}
	\left\lvert \frac{u_l}{n_l} \right\rvert \leq \left\lvert u_l \right\rvert 
\end{align*}
and thus 
\begin{align}\label{eq:ineq_coeff}
\left\lvert \varphi_{\mathbf{i}}(u^{(\ptca)}_{|\Omega^{(\ptca)}}) \right\rvert \leq \left\lvert \varphi_{\mathbf{i}}(u_{|\Omega^{(\ptca)}}) \right\rvert, & & \forall \mathbf{i} \in \mathbf{I}^{(\ptca)},
\end{align}
where $\{\varphi_{\mathbf{i}}\}_{\mathbf{i} \in \mathbf{I}^{(\ptca)}}$ denotes the dual basis introduced in Corollary \eqref{cor_dualbasis_physical}, relative to the isogeometric space defined on the  patch $\Omega^{(\ptca)}$.
Using Corollary \ref{cor_dualbasis_physical}, \eqref{eq:ineq_coeff} and \eqref{eq:est_2_bis} in the patch $\Omega^{(\ptca)}$ and the adjacent ones $\Omega^{(\ptcb)}$, there exist constants $C_1,C_2,C_3$, independent of $h$ and $\Npatch$, such that
\begin{align*}
\| u^{(\ptca)} \|^2_{L^2(\Omega^{(\ptca)})} & \leq C_1  h^d \sum_{\mathbf{i} \in \mathbf{I}^{(\ptca)}} \varphi_{\mathbf{i}}(u^{(\ptca)})^2 \leq C_1  h^d \sum_{\mathbf{i} \in \mathbf{I}^{(\ptca)}} \varphi_{\mathbf{i}}(u_{|\Omega^{(\ptca)}})^2 \\
& \leq C_2  \sum_{\mathbf{i} \in \mathbf{I}^{(\ptca)}} \|         u_{|\Omega^{(\ptca)}} \|^2_{L^2(\vett[F]^{(\ptca)}(Q'_{\mathbf{i}}))} \leq C_3  \| u  \|^2_{L^2(\Omega^{(\ptca)})}
\end{align*}
Finally, summing over all $\ptca \in \{ 1, \ldots , \NOmega \}$, it holds
\begin{align}\label{eq:C_sd_3}
\sum_{{\ptca}=1}^{\NOmega} \| \ua \|^2_{L^2(\Omega^{(\ptca)})} \leq C_3 \| u \|^2_{L^2(\Omega)}.
\end{align}
Combining  \eqref{eq:C_sd_1}, \eqref{eq:est_ratio_1} and \eqref{eq:C_sd_3}, we obtain
\begin{align}\label{eq:estimate_C_0}
C_{\mathrm{SD}} \leq C_3 C_{\mathrm{max}}.
\end{align}
\paragraph*{Part II: Strengthened Cauchy-Schwarz Inequalities.}
Standard Cauchy-Schwarz inequality, assures us that $\epsilon_{{\ptca}{\ptcb}} \leq 1$, for all $1 \leq {\ptca},{\ptcb} \leq \NOmega$. Furthermore, for each $\ptca \in \{1 , \ldots \NOmega \}$, there are at most $\Npatch$ indices $\ptcb \in \{1 , \ldots \NOmega \}$ such that there exists two basis functions $B_{l_1} \in \{ B_l :\, l \in \mathcal{J}^{(\ptca)}\}$ and \linebreak ${B_{l_2} \in \{ B_l :\, l \in \mathcal{J}^{(\ptcb)}\}}$ with $\supp(B_{l_1}) \cap \supp(B_{l_2}) \neq \emptyset$.
	As a consequence, in every row of the matrix $\mathcal{E}=\left\lbrace \epsilon_{{\ptca}{\ptcb}} \right\rbrace$ there are at most $\Npatch$ non-zero entries. 
	Combining these facts, we can conclude that the spectral radius of $\mathcal{E}$ satisfies:
\begin{equation}\label{eq:estimate_rho}
	\rho \left( \mathcal{E} \right) \leq  \Npatch.
\end{equation}

\paragraph*{Part III: Local Stability.}
Using \eqref{eq:est_2_bis} in the patch $\Omega^{(\ptca)}$ and the adjacent ones $\Omega^{(\ptcb)}$, there exists a constant $C$, independent of $h$ and $\Npatch$, such that
\begin{align*}
	\| u^{(\ptca
		)}\|^2_{L^2(\Omega)}& = \sum_{\substack{{\ptcb} \,: \overline{\Omega^{({\ptca})}} \cap \overline{\Omega^{({\ptcb})}} \neq \emptyset}} \| u^{(\ptca
		)}_{|\Omega^{(\ptcb)}}\|^2_{L^2(\Omega^{(\ptcb)})} \\
	&\leq C \Npatch h^d \sum_{\mathbf{i} \in \mathbf{I}^{(\ptca)}} (u^{(\ptca)}_{\mathbf{i}})^2   = C \Npatch h^d (\vett[u]^{(\ptca)})^T \vett[u]^{(\ptca)}.
\end{align*}
It holds
\begin{align}\label{eq:star_star}
\begin{split}
h^d (\vett[u]^{(\ptca)})^T \vett[u]^{(\ptca)} & = h^d \frac{(\vett[u]^{(\ptca)})^T (\vett[u]^{(\ptca)})}{(\vett[u]^{(\ptca)})^T \P^{(\ptca)} \vett[u]^{(\ptca)}} (\vett[u]^{(\ptca)})^T \P^{(\ptca)} \vett[u]^{(\ptca)} \\
& \leq \frac{h^d}{\lambda_{\mathrm{min}}(\P^{(\ptca)})} a^{(\ptca)}(u^{(\ptca)},u^{(\ptca)}).
\end{split}
\end{align}
Using \eqref{eq:star_star} and Corollary \ref{cor:cond_bound_prec}, finally yields local stability:
\begin{equation}\label{eq:loc_stab}
	\| u^{(\ptca)}\|^2_{L^2(\Omega)} \leq C \Npatch a^{(\ptca)}(u^{(\ptca)},u^{(\ptca)}),
\end{equation}
with another constant $C$, independent of $h$ and $\Npatch$.

Finally, by applying Lemma \ref{lem:widlund_toselli} with the estimates provided in \eqref{eq:estimate_C_0}, \eqref{eq:estimate_rho} and \eqref{eq:loc_stab}, we obtain that there exists a constant $C$, independent of $h$, verifying
\begin{align*}
		\kappa\left( \PadMPad \right) \leq C \Npatch^2.
\end{align*}
\end{proof}

\section{Preconditioners application and
  cost}\label{sec:application_cost}
The mass matrices and the preconditioners introduced in this paper are symmetric and positive definite. We then adopt the Preconditioned Conjugate Gradient method (PCG) to solve the associated linear systems.
For evaluating the computational cost of PCG, we recall that for each iteration, the two most expensive steps are: the solution of a linear system associated to the preconditioner and the computation of the residual, through a matrix-vector with $\M$.
We recall that all the univariate matrices have dimension $m$. Then, the single patch mass matrix has dimension $\Ndof=m^d$, while for the multipatch one we have $\Ndof \approx \NOmega m^d$.

\subsection{Single patch preconditioner}
The application of the single patch preconditioner is the solution of a linear system associated to
\begin{align*}
	\P=\Prs.
\end{align*}
Thanks to \eqref{eq:kron_prod}, it holds
\begin{align*}
	\widehat{\D}= \text{diag}(\Mp_d \otimes \dots \otimes \Mp_1)= \widehat{\D}_d \otimes \dots \otimes \widehat{\D}_1,
\end{align*}
where we have set $\widehat{\D}_i=\text{diag}(\Mp_i)$, for $i=1, \ldots , d$, and
\begin{align*}
	\widehat{\D}^{-\frac{1}{2}} \Mp \widehat{\D}^{-\frac{1}{2}} = \widehat{\D}_d^{-\frac{1}{2}} \Mp_d \widehat{\D}_d^{-\frac{1}{2}} \otimes \dots \otimes \widehat{\D}_1^{-\frac{1}{2}} \Mp_1 \widehat{\D}_1^{-\frac{1}{2}}.
\end{align*}
By exploiting  \eqref{eq:kron_inv}, the inverse of $\P$ may be expressed as
\begin{align*}
	\P^{-1}& =\left( \D^{\frac{1}{2}} \widehat{\D}^{-\frac{1}{2}} \Mp \widehat{\D}^{-\frac{1}{2}} \D^{\frac{1}{2}} \right)^{-1} \\
	& = \D^{- \frac{1}{2}}\left( \widehat{\D}_d^{-\frac{1}{2}} \Mp_d \widehat{\D}_d^{-\frac{1}{2}} \right)^{-1} \otimes \dots \otimes \left( \widehat{\D}_1^{-\frac{1}{2}} \Mp_1 \widehat{\D}_1^{-\frac{1}{2}} \right)^{-1} \D^{-\frac{1}{2}}.
\end{align*}
Therefore, the solution of a linear system associated to $\P$ can be summarized as follows.
\begin{algorithm}
	\caption{Single patch }\label{al:single_patch}
	\begin{algorithmic}[1]
		\State Assemble the matrices $\widehat{\D}_i^{-\frac{1}{2}} \Mp_i \widehat{\D}_i^{-\frac{1}{2}}$, for $i=1,\ldots , d$. 
		\State Compute the diagonal scaling $\vett[\widetilde{z}]=\D^{-\frac{1}{2}} \vett[z]$.
		\State Solve the linear system $\left(\widehat{\D}_d^{-\frac{1}{2}} \Mp_d\widehat{\D}_d^{-\frac{1}{2}} \otimes \dots \otimes \widehat{\D}_1^{-\frac{1}{2}} \Mp_1 \widehat{\D}_1^{-\frac{1}{2}} \right) \vett[\widetilde{y}]= \vett[\widetilde{z}]$.
		\State Compute the diagonal scaling $\vett[y]=\D^{-\frac{1}{2}} \vett[\widetilde{y}]$.
	\end{algorithmic}
\end{algorithm}

Step 1 represents the preconditioner setup. The matrices
$\widehat{\D}_i^{-\frac{1}{2}} \Mp_i \widehat{\D}_i^{-\frac{1}{2}}$
need to be constructed only once, before starting the PCG solver. The
overall cost of this step is $C(p)dm $ FLOPs,
where $C(p)$ denotes a constant that depends on $p$  and depends on
how the matrices $\Mp_i$ are computed: Gauss quadrature is the least
efficient approach and in such a case $C(p) = O(p^3)$. However this
cost can be considered negligible in practice (for examples, in all
the tests we present in Section \ref{sec:tests}, where  $p \ll m$), since  $m = \Ndof
^{1/d}$ and   Steps 2-4 have a cost which is proportional to $\Ndof$. Furthermore,  \B 
 Steps 2-4 need to be performed at each iteration. 
Both Steps 2 and 4 consist in the product of a diagonal matrix by a vector, thus their cost is $2\Ndof$ FLOPs.  
Thanks to  \eqref{eq:kronecker_vec_prod} and recalling that univariate mass matrices are symmetric banded matrices with bandwidth $p$,  Step 3 costs roughly $2d(2p+1) \Ndof = O(p \Ndof)$ FLOPs.
To sum up, we get that the application of Algorithm \ref{al:single_patch} 
requires roughly $2 \left( d(2p+1) + 1 \right) \Ndof = O(p \Ndof )$ FLOPs. 
We emphasize that the cost of our preconditioner  is proportional to $\Ndof$, and depends linearly with respect to $p$. Moreover, this costs is even smaller than that required for the residual computation PCG (or any iterative solver). Indeed, having in mind that the computational cost of a matrix-vector product is twice the number of non-zero entries of that matrix and that for the isogeometric mass matrix this number is at most $(2p+1)^d \Ndof$,  it follows that the residual computation requires $2(2p+1)^d \Ndof = O(p^d \Ndof)$ FLOPs. 

\subsection{Multipatch preconditioner}
The application of $\Prad$, provided in \eqref{def:asp}, involves, for $\ptca \in \{ 1, \ldots , \NOmega \}$, the application of the operators $R^{(\ptca)}$ and  ${R^{(\ptca)}}^T$, whose cost is negligible, and the application of $\left({\P^{(\ptca)}}\right)^{-1}$, whose cost has been analyzed in the previous section. 
 In conclusion, the cost of application of $\Prad$ is $O( p \sum_{{\ptca}=1}^{\NOmega}\Ndof^{(\ptca)})=O(p\Ndof)$. 

\section{Numerical Tests}\label{sec:tests}
In this section we show the performance of the preconditioners presented in this paper.
In our simulations, we consider only sequential executions and we force the use of a single computational thread in a Intel Core i7-5820K processor, running at 3.30 GHz and with 64 GB of RAM. All the tests are performed with Matlab R2015a and GeoPDEs toolbox \cite{Vazquez2016}. The linear system is solved by PCG, with tolerance equal to $10^{-8}$ and with the null vector as initial guess.
We denote by $n_{\mathrm{sub}}$ the number of subdivisions, which are the same in each parametric direction and in each patch. Moreover, we underline that we only consider splines of maximal regularity. The symbol ``*'' denotes the impossibility of formation of the matrix $\M$, due to memory requirements.

For assessing the performance of the preconditioners, we consider the problem of finding the $L^2$-projection of a given function $f$, on different domains, see Figures \ref{fig:2D} and \ref{fig:3D}. For bidimensional problems, the given function is \linebreak
${f(x,y)=\cos(\pi x) \cos(\pi y)}$, while for the tridimensional ones, we have set \linebreak ${f(x,y,z)=\cos(\pi x) \cos(\pi y) \cos(\pi z)}$.

\begin{figure}
	\begin{subfigure}{0.49\textwidth}
		\includegraphics[width=\textwidth]{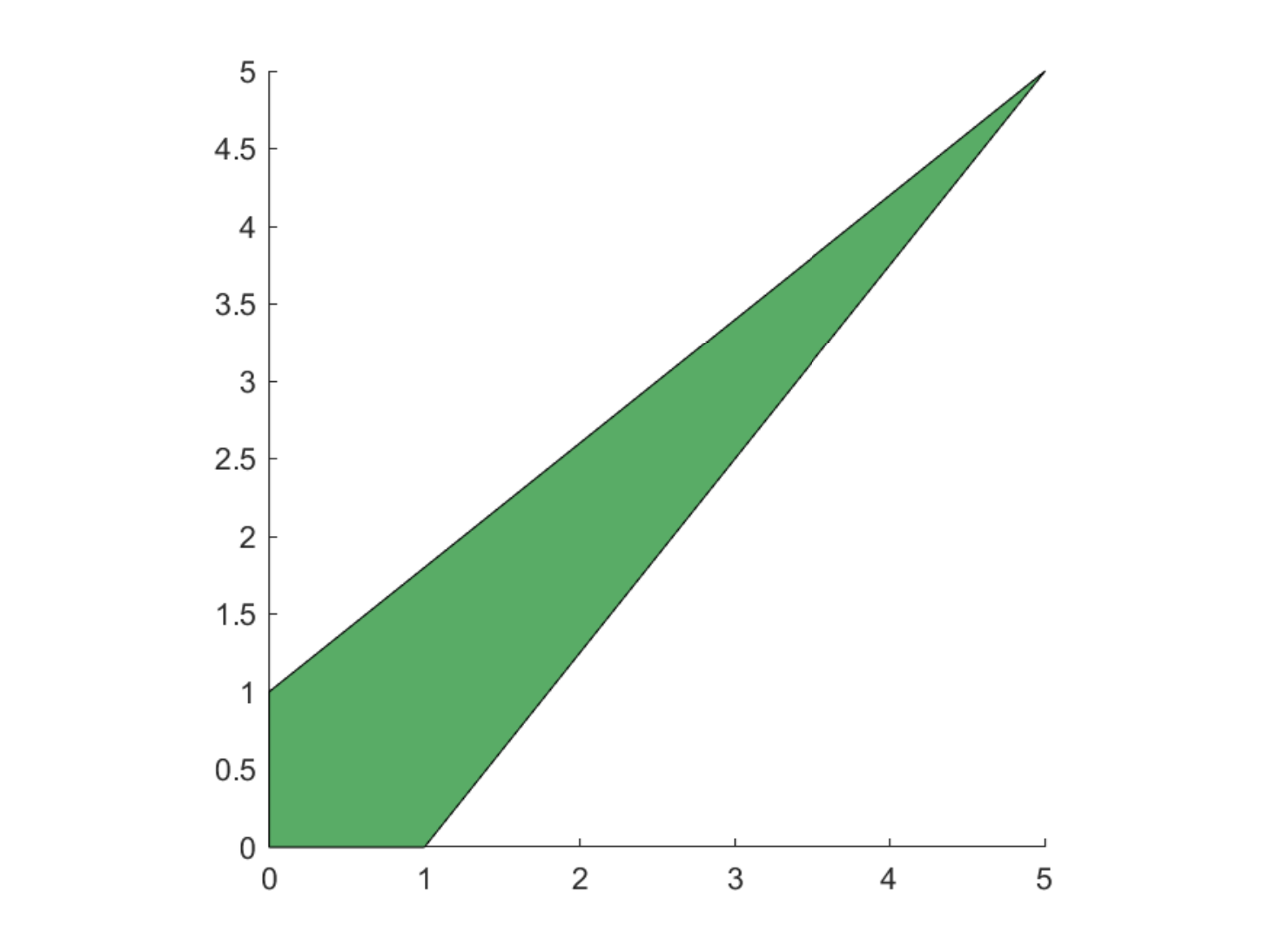}
		\caption{Kite.}
		\label{fig:Kite}
	\end{subfigure}
	\begin{subfigure}{0.49\textwidth}
		\includegraphics[width=\textwidth]{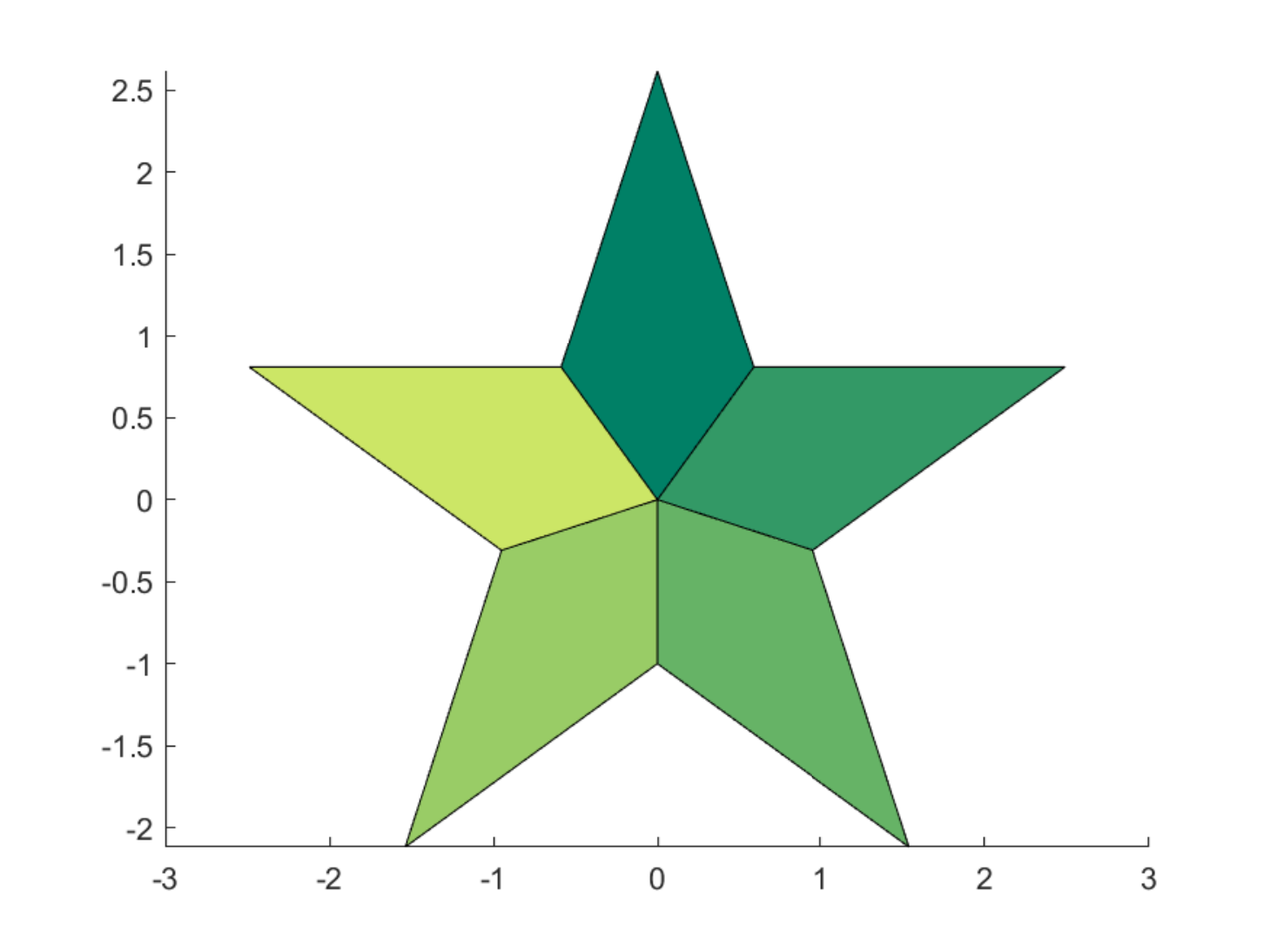}
		\caption{Multipatch Star.}
		\label{fig:Star_mp}
	\end{subfigure}
	\begin{subfigure}{0.49\textwidth}
		\includegraphics[width=\textwidth]{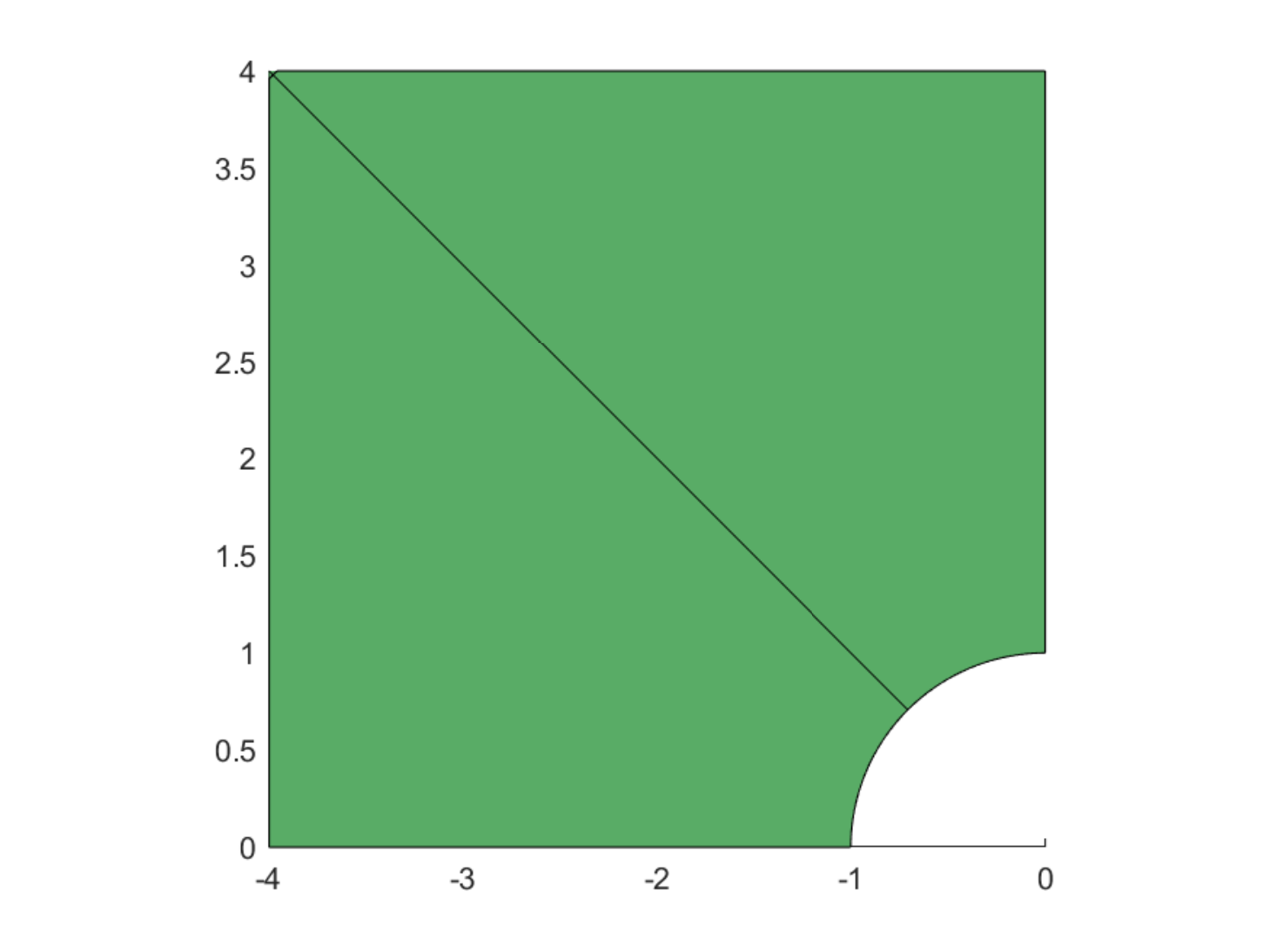}
		\caption{Holed plate.}
		\label{fig:Hollow}
	\end{subfigure}
	\begin{subfigure}{0.49\textwidth}
		\includegraphics[width=\textwidth]{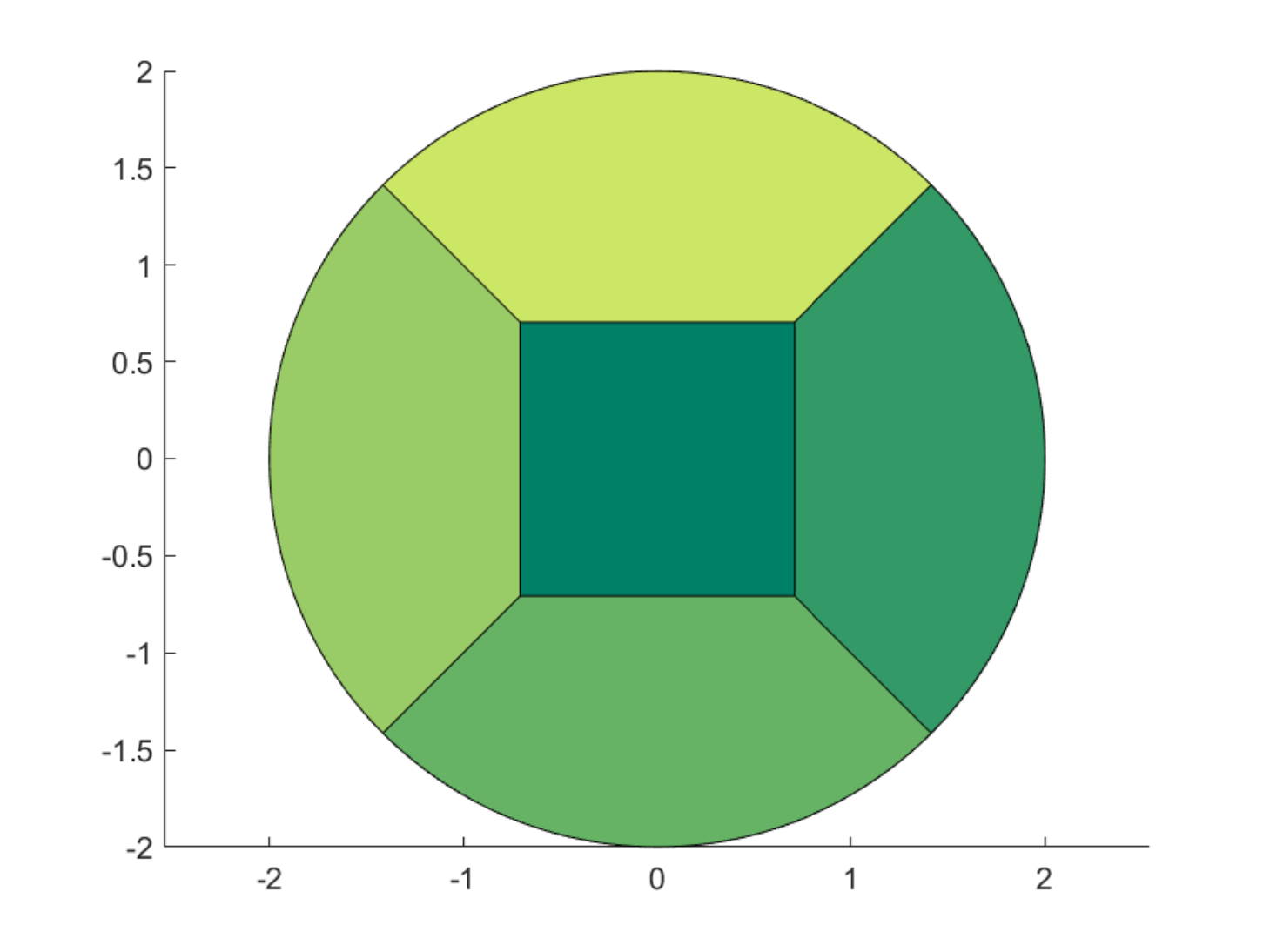}
		\caption{Multipatch Disc.}
		\label{fig:Disc_mp}
	\end{subfigure}
\begin{subfigure}{0.49\textwidth}
	\includegraphics[width=\textwidth]{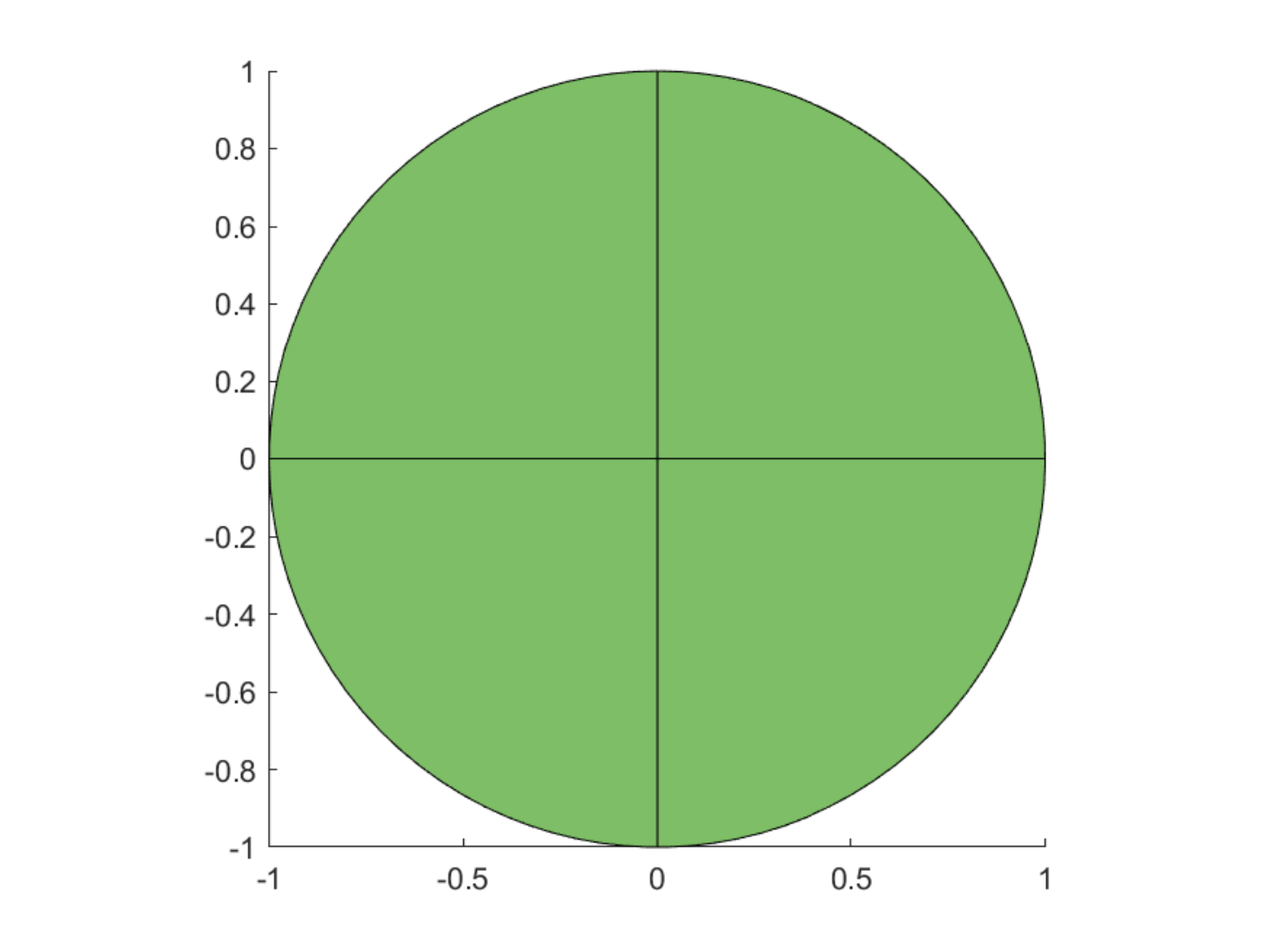}
	\caption{Disc with one singularity.}
	\label{fig:Disc1}
\end{subfigure}
\begin{subfigure}{0.49\textwidth}
	\includegraphics[width=\textwidth]{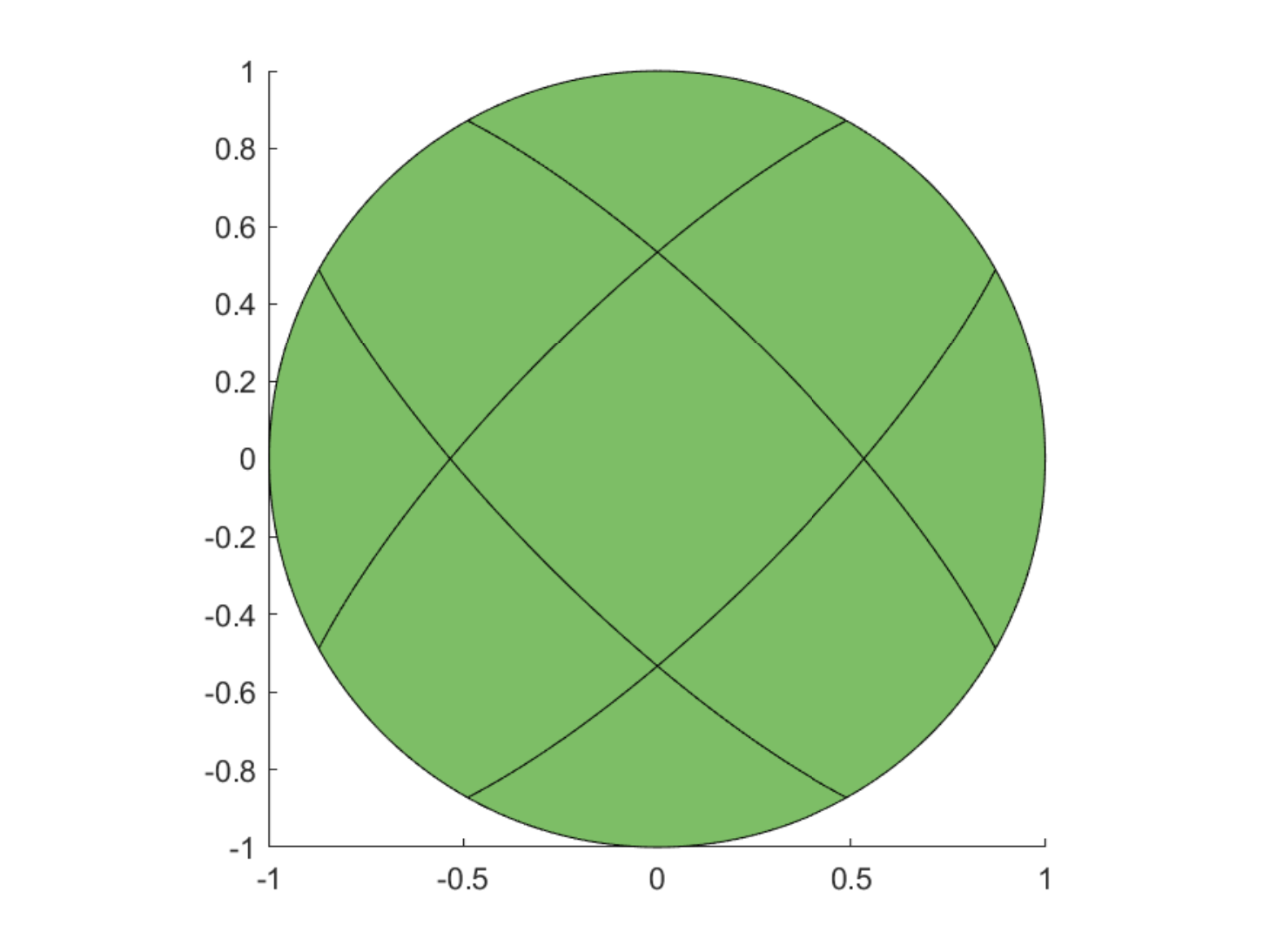}
	\caption{Disc with four singularities.}
	\label{fig:Disc4}
\end{subfigure}
\caption{Bidimensional domains.}
\label{fig:2D}
\end{figure}

\begin{figure}
	\begin{subfigure}{0.49\textwidth}
		\includegraphics[width=\textwidth]{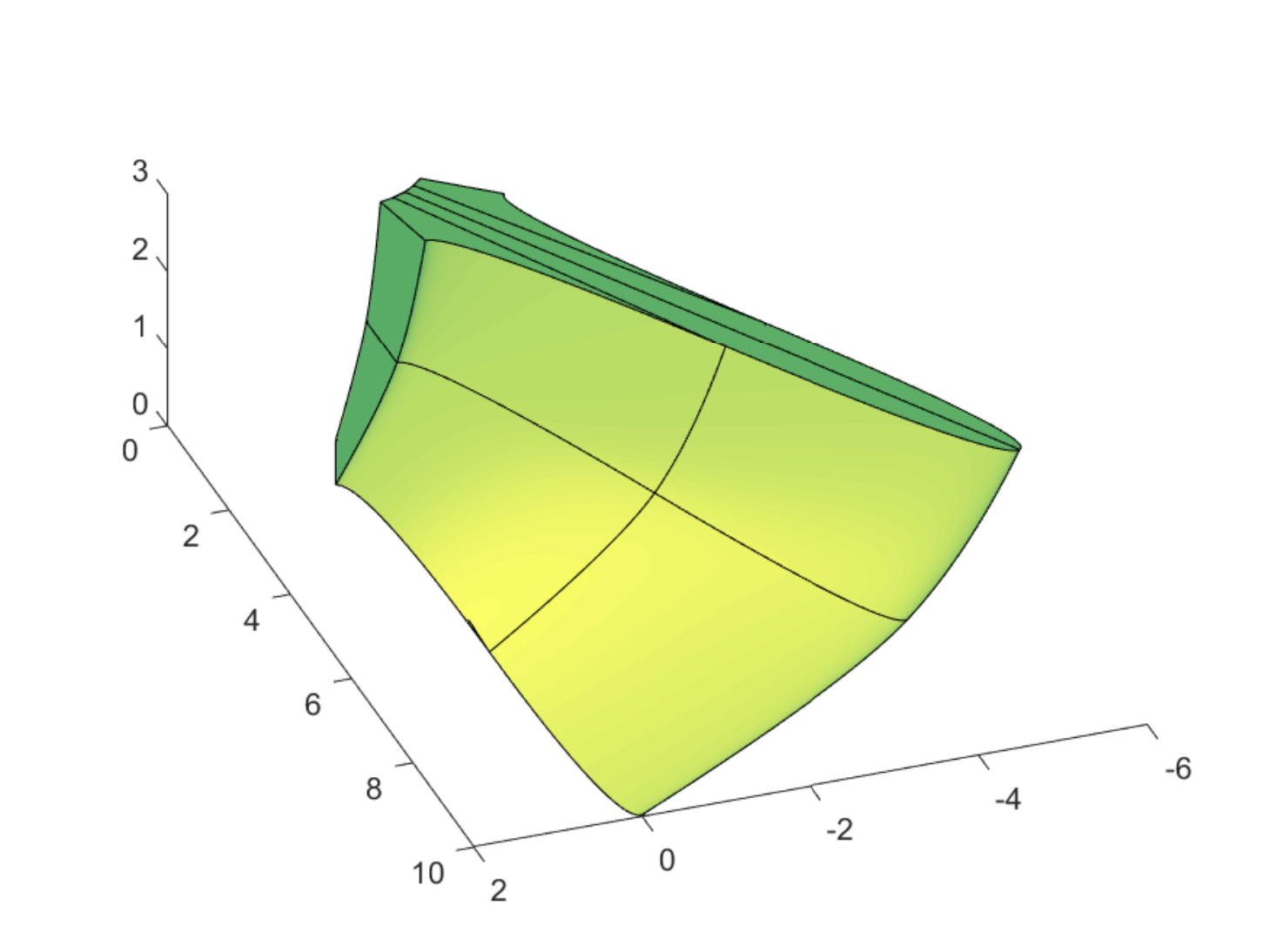}
		\caption{Blade.}
		\label{fig:Blade}
	\end{subfigure}
	\begin{subfigure}{0.49\textwidth}
		\includegraphics[width=\textwidth]{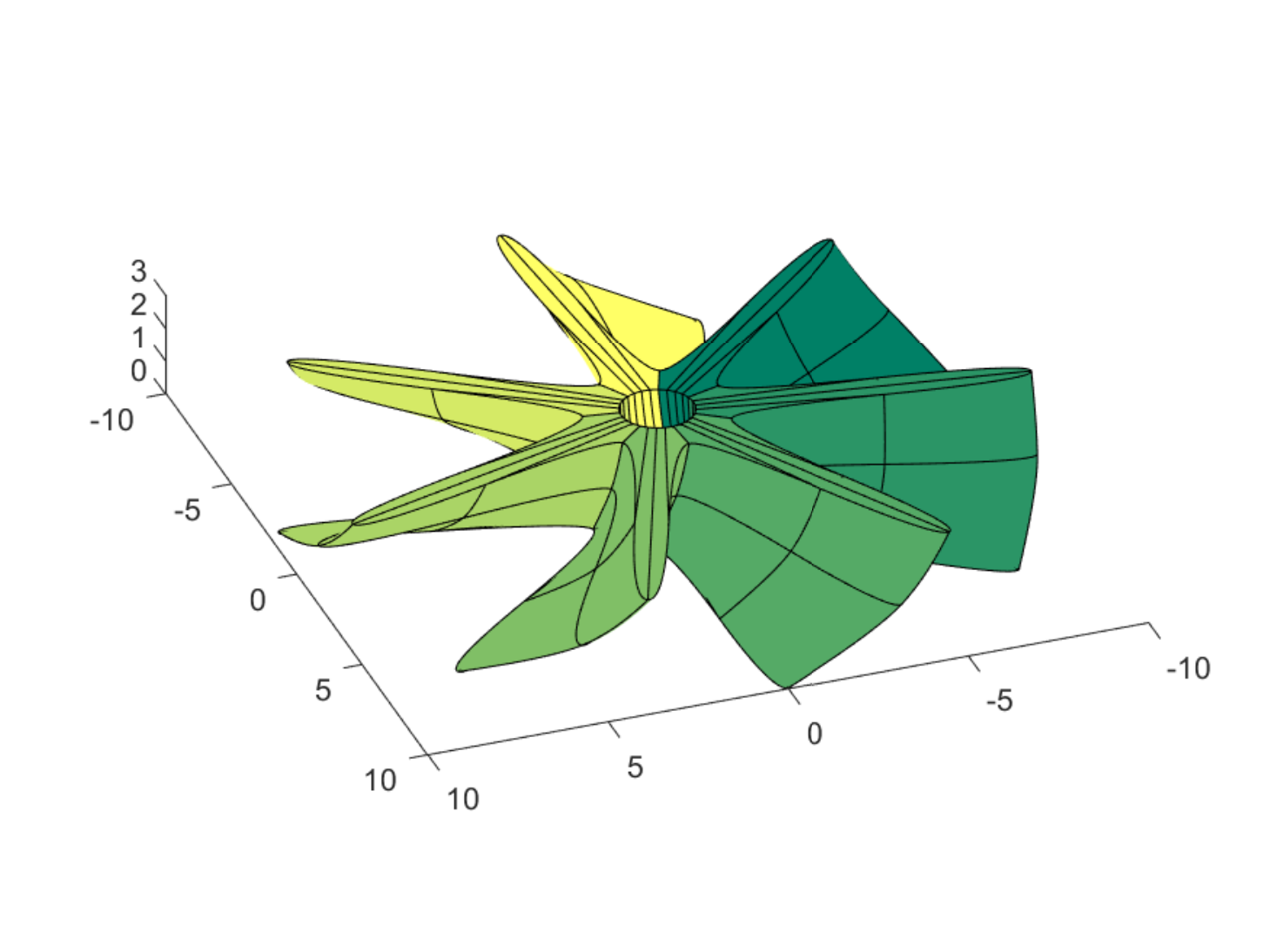}
		\caption{Multipatch Fan.}
		\label{fig:Fan_mp}
	\end{subfigure}
	\caption{Tridimensional domains.}
	\label{fig:3D}
\end{figure}

\subsection{Single Patch domains}
As examples of regularly parametrized single patch domains, we consider a bidimensional kite and a tridimensional blade (see Figures \ref{fig:Kite} and \ref{fig:Blade}).
For the kite domain, we compute the condition number of the unpreconditioned and preconditioned mass matrix for different values of $h$ and $p$ and report them in Tables \ref{table:Kite_cond_ump}  and \ref{table:Kite_cond}, respectively. By comparing these numbers, we can see that the condition number is dramatically reduced by our preconditioning strategy. In particular, as predicted by Theorem \ref{thm:single_patch_h}, the condition number of preconditioned matrices converges to 1 as the mesh-size $h$ goes to 0. 
Tables \ref{table:Kite_its} and \ref{table:Blade_its} show the number of iterations and computation time spent by PCG for the kite and the blade domain, respectively. We emphasize that the number of iterations is always very low and even decreases when $h$ is reduced.

\renewcommand\arraystretch{1.1} 
\begin{table}
	\centering                                                                            
	\begin{tabular}{|c|c|c|c|c|c|}                   
		\hline                                           
		$n_{\mathrm{sub}}$ & $p=2$ & $p=3$ & $p=4$ & $p=5$ & $p=6$ \\ 
		\hline                                                               
		16 & $5.540 \cdot 10^2$ & $2.980 \cdot 10^3$ & $ 1.673 \cdot 10^4$ & $9.892 \cdot 10^4$ & $ 6.106 \cdot 10^5$ \\  
		\hline                                                               
		32 & $7.040 \cdot 10^2$ & $ 4.063 \cdot 10^3$ & $2.435 \cdot 10^4$ & $ 1.523 \cdot 10^5$ & $ 9.853 \cdot 10^5$ \\ 
		\hline                                                               
		64 & $8.150 \cdot 10^2$ & $4.929 \cdot 10^3$ & $3.082  \cdot 10^4$ & $ 2.002 \cdot 10^5$ & $1.340 \cdot 10^6$ \\
		\hline
		128 & $8.900 \cdot 10^2$ & $5.536 \cdot 10^3$ & $3.555  \cdot 10^4$ & $ 2.366 \cdot 10^5$ & $1.617 \cdot 10^6$ \\
		\hline                                                               
	\end{tabular}
	\caption{Condition number of mass matrix for kite.} 
	\label{table:Kite_cond_ump} 
\end{table}   

\renewcommand\arraystretch{1.1} 
\begin{table}
\centering                                                                            
\begin{tabular}{|c|c|c|c|c|c|}                   
\hline                                           
$n_{\mathrm{sub}}$ & $p=2$ & $p=3$ & $p=4$ & $p=5$ & $p=6$ \\ 
\hline                                            
16 & 1.056 & 1.077 & 1.103 & 1.129 & 1.157 \\ 
\hline                                            
32 & 1.034 & 1.047 & 1.062 & 1.078 & 1.094 \\ 
\hline                                            
64 & 1.019 & 1.027 & 1.035 & 1.045 & 1.054 \\ 
\hline                                            
128 & 1.010 & 1.015 & 1.019 & 1.024 & 1.030 \\
\hline      
\end{tabular} 
\caption{Condition number of preconditioned mass matrix for kite.} 
\label{table:Kite_cond} 
\end{table}   

\renewcommand\arraystretch{1.1} 
\begin{table}
\centering                                  
\begin{tabular}{|c|c|c|c|c|c|}                   
\hline                                           
$n_{\mathrm{sub}}$ & $p=2$ & $p=3$ & $p=4$ & $p=5$ & $p=6$ \\ 
\hline                                                                     
16 & 4 / 0.00134 & 4 / 0.00141 & 4 / 0.00152 & 4 / 0.00167 & 4 / 0.00184 \\
\hline                                                                     
32 & 3 / 0.00191 & 3 / 0.00203 & 3 / 0.00225 & 4 / 0.00316 & 4 / 0.00352 \\
\hline                                                                     
64 & 3 / 0.00465 & 3 / 0.00525 & 3 / 0.00578 & 3 / 0.00675 & 3 / 0.00812 \\
\hline                                                                     
128 & 3 / 0.0155\z & 3 / 0.0181\z & 3 / 0.0213\z & 3 / 0.0255\z & 3 / 0.0310\z \\    
\hline
\end{tabular}
\caption{Iterations and time spent by PCG for kite.}
\label{table:Kite_its} 
\end{table}

\renewcommand\arraystretch{1.1} 
\begin{table}
	\centering                                  
	\begin{tabular}{|c|c|c|c|c|c|}                   
		\hline                                           
		$n_{\mathrm{sub}}$ & $p=2$ & $p=3$ & $p=4$ & $p=5$ & $p=6$ \\ 
		\hline                                                                      
		16 & 6 / 0.0137 & 6 / 0.0315 & 6 / 0.0654 & 6 / 0.137 & 7 / 0.274 \\        
		\hline                                                                      
		32 & 5 / 0.0821 & 5 / 0.154\z & 5 / 0.299\z & 5 / 0.608 & 6 / 1.27\z \\           
		\hline                                                                      
		64 & 4 / 0.499\z & 4 / 1.01\z\z & 4 / 1.75\z\z & 4 / 3.49\z & 4 / 6.37\z \\ 
		\hline                                                            
	\end{tabular}
	\caption{Iterations and time spent by PCG for blade.}
	\label{table:Blade_its} 
\end{table}

The case of singularly parametrized domains is beyond the theory of Section \ref{sec:single_patch} (Assumption \ref{ass:g_regularity} does not hold). Nevertheless, we test numerically this situation on three examples: a holed plate with a singular point in the top left vertex (see Figure \ref{fig:Hollow}); a disc with a singularity in the center (Figure \ref{fig:Disc1}) and a disc with four singularities on the boundary (Figure \ref{fig:Disc4}). 
In all the three examples the condition number is always close to 1 and, even though it does not converge to 1 as in the non-singular case, it does not grow as $h$ goes to 0. 
Accordingly, the number of PCG iterations is very low (see Tables \ref{table:Hollow_its}, \ref{table:Disc1_its}  and \ref{table:Disc4_its}).

We are interested in studying the dependence on $p$ of the condition number of the preconditioned system. 
For this purpose, we follow Remark \ref{rem:cond_fun} and define ${\mu :=  \kappa\left( \PMP \right) - 1}$. For all the problems considered so far, the numerical results show that $\mu$ grows roughly linearly with respect to $p$.
This phenomenon can be clearly seen in Figure \ref{fig:Plot_sp}. The crucial consequence of this fact is that the number of PCG iterations is almost independent of $p$. This is confirmed by the results already shown in Tables  \ref{table:Kite_its}, \ref{table:Blade_its},  \ref{table:Hollow_its}, \ref{table:Disc1_its}  and \ref{table:Disc4_its}.

\renewcommand\arraystretch{1.1} 
\begin{table}
\centering                                         
	 \begin{tabular}{|c|c|c|c|c|c|}                   
		\hline                                           
		$n_{\mathrm{sub}}$ & $p=2$ & $p=3$ & $p=4$ & $p=5$ & $p=6$ \\ 
		 \hline                                            
		 16 & 1.692 & 1.861 & 2.018 & 2.173 & 2.330 \\ 
		 \hline                                            
		 32 & 1.696 & 1.866 & 2.024 & 2.177 & 2.330 \\ 
		 \hline                                            
		 64 & 1.699 & 1.869 & 2.028 & 2.182 & 2.334 \\ 
		 \hline                                            
		 128 & 1.700 & 1.871 & 2.029 & 2.184 & 2.336 \\
		\hline  
	\end{tabular} 
	\caption{Condition number of preconditioned mass matrix for holed plate.} \label{table:Hollow_cond} 
\end{table}   

\renewcommand\arraystretch{1.1} 
\begin{table}
	\centering                                  
	\begin{tabular}{|c|c|c|c|c|c|}                   
		\hline                                           
		$n_{\mathrm{sub}}$ & $p=2$ & $p=3$ & $p=4$ & $p=5$ & $p=6$ \\ 
		\hline                                                                      
		16 & 6 / 0.00181 & 7 / 0.00227 & 7 / 0.00248 & 7 / 0.00275 & 7 / 0.00311 \\ 
		\hline                                                                      
		32 & 6 / 0.00335 & 6 / 0.00370 & 6 / 0.00414 & 6 / 0.00466 & 6 / 0.00528 \\  
		\hline                                                                      
		64 & 5 / 0.00734 & 6 / 0.00968 & 6 / 0.0108\z & 6 / 0.0124\z & 6 / 0.0151\z \\    
		\hline                                                                      
		128 & 5 / 0.0250\z & 5 / 0.0287\z & 5 / 0.0343\z & 5 / 0.0405\z & 5 / 0.0488\z \\      
		\hline  
	\end{tabular}
	\caption{Iterations and time spent by PCG for holed plate.}
	\label{table:Hollow_its} 
\end{table}

\renewcommand\arraystretch{1.1} 
\begin{table}
\centering                                                                     
\begin{tabular}{|c|c|c|c|c|c|}                   
		\hline                                           
		$n_{\mathrm{sub}}$ & $p=2$ & $p=3$ & $p=4$ & $p=5$ & $p=6$ \\ 
		\hline                                            
		16 & 1.093 & 1.170 & 1.249 & 1.323 & 1.395 \\ 
		\hline                                            
		32 & 1.090 & 1.159 & 1.230 & 1.305 & 1.381 \\ 
		\hline                                            
		64 & 1.082 & 1.148 & 1.212 & 1.276 & 1.339 \\ 
		\hline                                            
		128 & 1.077 & 1.140 & 1.200 & 1.259 & 1.317 \\
		\hline  
	\end{tabular} 
	\caption{Condition number of preconditioned mass matrix for disc with one singularity.}
	\label{table:Disc1_cond} 
\end{table}   

\renewcommand\arraystretch{1.1} 
\begin{table}
	\centering                                  
	\begin{tabular}{|c|c|c|c|c|c|}                   
		\hline                                           
		$n_{\mathrm{sub}}$ & $p=2$ & $p=3$ & $p=4$ & $p=5$ & $p=6$ \\ 
		\hline                                                                     
		16 & 5 / 0.00156 & 5 / 0.00172 & 5 / 0.00192 & 6 / 0.00259 & 5 / 0.00259 \\
		\hline                                                                     
		32 & 4 / 0.00198 & 5 / 0.00274 & 5 / 0.00305 & 5 / 0.00355 & 5 / 0.00416 \\
		\hline                                                                     
		64 & 4 / 0.00435 & 4 / 0.00498 & 5 / 0.00693 & 5 / 0.00815 & 5 / 0.00951 \\
		\hline                                                                     
		128 & 4 / 0.0125\z & 4 / 0.0146\z & 4 / 0.0175\z & 4 / 0.0220\z & 5 / 0.0330\z \\  
		\hline        
	\end{tabular}
	\caption{Iterations and time spent by PCG for disc with one singularity.}
	\label{table:Disc1_its} 
\end{table}

\renewcommand\arraystretch{1.1} 
\begin{table}
\centering
\begin{tabular}{|c|c|c|c|c|c|}                   
		\hline                                           
		$n_{\mathrm{sub}}$ & $p=2$ & $p=3$ & $p=4$ & $p=5$ & $p=6$ \\ 
		\hline                                            
		16 & 1.167 & 1.252 & 1.350 & 1.459 & 1.575 \\ 
		\hline                                            
		32 & 1.161 & 1.241 & 1.341 & 1.450 & 1.564 \\ 
		\hline                                            
		64 & 1.158 & 1.237 & 1.338 & 1.447 & 1.559 \\ 
		\hline                                            
		128 & 1.156 & 1.236 & 1.336 & 1.444 & 1.556 \\
		\hline 
	\end{tabular} 
	\caption{Condition number of preconditioned mass matrix for disc with four singularities.}
	\label{table:Disc4_cond} 
\end{table}   

\renewcommand\arraystretch{1.1} 
\begin{table}
	\centering                                  
	\begin{tabular}{|c|c|c|c|c|c|}                   
		\hline                                           
		$n_{\mathrm{sub}}$ & $p=2$ & $p=3$ & $p=4$ & $p=5$ & $p=6$ \\ 
		\hline                                                                     
		16 & 5 / 0.00157 & 5 / 0.00172 & 6 / 0.00203 & 6 / 0.00229 & 6 / 0.00245 \\
		\hline                                                                     
		32 & 5 / 0.00282 & 5 / 0.00310 & 5 / 0.00329 & 5 / 0.00368 & 6 / 0.00478 \\ 
		\hline                                                                     
		64 & 4 / 0.00575 & 4 / 0.00648 & 5 / 0.00858 & 5 / 0.00983 & 5 / 0.0117\z \\ 
		\hline                                                                     
		128 & 4 / 0.0191\z & 4 / 0.0223\z & 4 / 0.0264\z & 4 / 0.0315\z & 4 / 0.0378\z \\    
		\hline        
	\end{tabular}
	\caption{Iterations and time spent by PCG for disc with four singularities.}
	\label{table:Disc4_its} 
\end{table}

We now compare our preconditioner $\P$ as defined in
\eqref{def:single_patch_prec} with the preconditioner proposed by  Chan
and Evans in
\cite{CHAN201822}, that we denote by $\PCE$.  This preconditioner has
some similarity with the one we propose and, moreover, is one of the
best performing to our knowledge. The  application of  $\PCE$
is, by definition,  a multiplication  by  
\begin{displaymath}
\PCE^{-1} = \widehat{\mathbf{M}} ^{-1}\mathbf{M}_{\jac{\vett[F]}^{-1}}\widehat{\mathbf{M}} ^{-1},
\end{displaymath}
where $ \mathbf{M}_{\jac{\vett[F]}^{-1}}$ is a weighted mass matrix  as
\eqref{eq:mass_weighted} with $\omega=\jac{\vett[F]}^{-1}$.  Table
\ref{table:comparison_k}   reports on the condition number of the
preconditioned mass matrix, by $\P$ and $\PCE $.  For a more in-depth
analysis of the efficiency of the two methods, we need to consider
that  one iteration of  $\PCE$-PCG costs roughly as two iterations
of $\P$-PCG.  This is because in the latter case the cost is concentrated in the matrix-vector product with $\mathbf{M}$ and the solution of a system with $\widehat{\mathbf{M}}$, while in the former case two such products (one with $\mathbf{M}$ and one with $\mathbf{M}_{\jac{\vett[F]}^{-1}}$) and two such solutions are needed. 

It is well-known that when Conjugate Gradient (CG)  is used to solve a linear system $\mathbf{A} {x} = \mathbf{b}$, with $\mathbf{A}$ symmetric and positive definite, it holds
$$ \frac{\left\| \mathbf{e}_k \right\|_\mathbf{A}}{\left\| \mathbf{e}_0 \right\|_\mathbf{A}} \leq  2 \left( \frac{\sqrt{\kappa\left(\mathbf{A}\right)} - 1}{\sqrt{\kappa\left(\mathbf{A}\right)} + 1} \right)^k, \qquad k = 1,2,\ldots, $$
where $\mathbf{e}_k$ is the error relative to the $k-$th iteration, and $ \left\|\mathbf{e}_k\right\|_\mathbf{A} := \sqrt{\mathbf{e}_k^T \mathbf{A} \mathbf{e}_k}$ for $k\geq 0$.
Thus, at each iteration of CG , the upper bound on the relative error is reduced by a factor 
\begin{equation}\label{eq:def_q}
	q\left(\mathbf{A}\right) :=\frac{\sqrt{\kappa\left(\mathbf{A}\right)} - 1}{\sqrt{\kappa\left(\mathbf{A}\right)} + 1} < 1.
\end{equation}
 In our case, $\mathbf{A}$ is the preconditioned mass matrix. Then, 
we use this principle in order to compare the effectiveness of $\P$ and $\PCE$. 
Since one iteration of $\PCE$-PCG costs twice as one
iterations of $\P$-PCG, we compare $q(\PMPCE)$ with the factor by
which the error bound is reduced after 2 iterations of $\P$-PCG, which
is $q(\PMP)^2$. The results are shown in Table
\ref{table:comparison}. In all cases, the bound-reducing factor is significantly small, confirming that both approaches lead to fast
solvers, with an advantage for   $\P$   in all the considered problems
and especially in the case of the singular parametrizations considered.

\renewcommand\arraystretch{1.1} 
\begin{table}
	\centering                                  
	\begin{tabular}{|c|c|c| }                
		\hline 	domain & $\kappa(\PMPCE)$& $\kappa(\PMP)$  \\ 
		\hline     	kite & $1.049$ & $1.157$ \\ 
		\hline 	blade & $1.202$ & $1.538$  \\ 
		\hline 	holed plate & $1.051$ & $1.216$ \\ 
		\hline 	disc (e) & $3.652$ & $1.395$ \\ 
		\hline 	disc (f) & $3.185$ & $1.575$ \\ 
		\hline
	\end{tabular}
\caption{ Condition number of preconditioned mass matrix for $n_{\mathrm{sub}}=16$ and $p=6$:
	comparison between $ \P$ and $ \PCE $. }
\label{table:comparison_k} 
\end{table}
\begin{table}
	\centering    
	\begin{tabular}{|c|c|c| }
		\hline 	domain & $q(\PMPCE)$& $q(\PMP)^2$  \\ 
		\hline     	kite & $1.20 \cdot 10^{-2}$ & $1.33 \cdot 10^{-3}$ \\ 
		\hline 	blade & $4.60 \cdot 10^{-2}$ & $1.15 \cdot 10^{-2}$  \\ 
		\hline 	holed plate & {\Rd$1.24 \cdot 10^{-2}$} & {\Rd$2.39 \cdot 10^{-3}$} \\ 
		\hline 	disc (e) & $3.13 \cdot 10^{-1}$ & $6.89 \cdot 10^{-3}$ \\ 
		\hline 	disc (f) & $2.82 \cdot 10^{-1}$ & $1.28 \cdot 10^{-2}$ \\ 
		\hline
	\end{tabular}
	\caption{\Rd 
Error reduction factors relative to one iteration of $\PCE$-PCG (left
column) and two  iterations of $\P$-PCG (right column), having a similar
computation cost. The factors refer to the case  $n_{\mathrm{sub}}=16$ and $p=6$.
}	\label{table:comparison} 
\end{table}

\subsection{Multipatch domains}

Finally, in order to evaluate the performance of our Additive
Schwarz preconditioner, we consider three domains: a multipatch five-pointed star
(Figure \ref{fig:Star_mp}), a multipatch disc (Figure
\ref{fig:Disc_mp}) and a multipatch fan (Figure \ref{fig:Fan_mp}),
obtained by gluing together 7 blade-shaped patches like the one represented in
Figure \ref{fig:Blade}. 

As in the single patch case, we compare the condition number of the original mass matrix (Tables \ref{table:Star_mp_cond_ump} and \ref{table:Disc_mp_cond_ump}) with that of the preconditioned one (Tables \ref{table:Star_mp_cond} and \ref{table:Disc_mp_cond}). In all cases, the preconditioner greatly reduces the condition number of the matrix, robustly with respect to $h$.  Moreover, the growth of the condition number with respect to the spline degree $p$ seems to be linear (see Figure \ref{fig:Plot_mp}). This is reflected also in the number of iterations needed by PCG to reach the given tolerance, see Tables \ref{table:Disc_mp_its} and \ref{table:Fan_mp_its}.


\begin{table}
	\centering                                 
	\begin{tabular}{|c|c|c|c|c|c|}                   
		\hline                                           
		$n_{\mathrm{sub}}$ & $p=2$ & $p=3$ & $p=4$ & $p=5$ & $p=6$ \\ 
		\hline                                                            
		16 & $1.326 \cdot 10^2$ & $9.994 \cdot 10^2$ & $6.848 \cdot 10^3$ & $4.611 \cdot 10^4$ & $3.160 \cdot 10^5$ \\ 
		\hline                                                            
		32 & $1.503 \cdot 10^2$ & $1.156 \cdot 10^3$ & $8.104 \cdot 10^3$ & $5.565 \cdot 10^4$ & $3.845 \cdot 10^5$ \\
		\hline                                                            
		64 & $1.618 \cdot 10^2$ & $1.258 \cdot 10^3$ & $8.935 \cdot 10^3$ & $6.217 \cdot 10^4$ & $4.351 \cdot 10^5$ \\
		\hline     	
	\end{tabular} 
	\caption{Condition number of mass matrix for multipatch star.} \label{table:Star_mp_cond_ump} 
\end{table} 

\begin{table}
\centering                                 
\begin{tabular}{|c|c|c|c|c|c|}                   
\hline                                           
$n_{\mathrm{sub}}$ & $p=2$ & $p=3$ & $p=4$ & $p=5$ & $p=6$ \\ 
\hline                                                
16 & 39.69 & 48.05 & 56.17 & 64.03 & 71.62 \\
\hline                                                
32 & 39.80 & 48.23 & 56.42 & 64.32 & 71.95 \\
\hline                                                
64 & 39.86 & 48.33 & 56.55 & 64.48 & 72.13 \\
\hline      	
\end{tabular} 
\caption{Condition number of preconditioned mass matrix for multipatch star.} \label{table:Star_mp_cond} 
\end{table} 

\begin{table}
	                           \centering                                 
\begin{tabular}{|c|c|c|c|c|c|}                   
\hline                                           
$n_{\mathrm{sub}}$ & $p=2$ & $p=3$ & $p=4$ & $p=5$ & $p=6$ \\ 
\hline                                                                        
16 & 13 / 0.0111 & 14 / 0.0134 & 14 / 0.0145 & 15 / 0.0175 & 15 / 0.0198 \\   
\hline                                                                        
32 & 12 / 0.0251 & 12 / 0.0276 & 13 / 0.0320 & 13 / 0.0375 & 14 / 0.0488 \\    
\hline                                                                        
64 & 10 / 0.0647 & 12 / 0.0892 & 12 / 0.102\z & 12 / 0.119\z & 12 / 0.141\z \\      
\hline                                                                        
128 & \z9 / 0.221\z & 11 / 0.303\z & 11 / 0.348\z & 11 / 0.393\z & 12 / 0.502\z \\   
\hline
\end{tabular} 
\caption{Iterations and time spent by PCG for multipatch star.}
\label{table:Star_mp_its} 
\end{table}


\begin{table}
	\centering                                 
	\begin{tabular}{|c|c|c|c|c|c|}                   
		\hline                                           
		$n_{\mathrm{sub}}$ & $p=2$ & $p=3$ & $p=4$ & $p=5$ & $p=6$ \\ 
		\hline                                                              
		16 & $2.098 \cdot 10^2$ & $1.550 \cdot 10^3$ & $1.049 \cdot 10^4$ & $7.007 \cdot 10^4$ & $4.761 \cdot 10^5$ \\ 
		\hline                                                              
		32 & $2.585 \cdot 10^2$ & $1.970 \cdot 10^3$ & $1.374 \cdot 10^4$ & $9.360 \cdot 10^4$ & $6.399 \cdot 10^5$ \\ 
		\hline                                                              
		64 & $2.949 \cdot 10^2$ & $2.291 \cdot 10^3$ & $1.637 \cdot 10^4$ & $1.143 \cdot 10^5$ & $7.994 \cdot 10^5$ \\
		\hline   	
	\end{tabular} 
	\caption{Condition number of mass matrix for multipatch disc.} \label{table:Disc_mp_cond_ump} 
\end{table}

\begin{table}
\centering                                 
\begin{tabular}{|c|c|c|c|c|c|}                   
\hline                                           
$n_{\mathrm{sub}}$ & $p=2$ & $p=3$ & $p=4$ & $p=5$ & $p=6$ \\ 
\hline                                                
16 & 13.88 & 16.02 & 18.03 & 19.92 & 21.70 \\
\hline                                                
32 & 13.99 & 16.16 & 18.18 & 20.08 & 21.87 \\
\hline                                                
64 & 14.06 & 16.24 & 18.28 & 20.18 & 21.98 \\
\hline   	
\end{tabular} 
\caption{Condition number of preconditioned mass matrix for multipatch disc.} \label{table:Disc_mp_cond} 
\end{table} 

\begin{table}
	\centering                                 
\begin{tabular}{|c|c|c|c|c|c|}                   
\hline                                           
$n_{\mathrm{sub}}$ & $p=2$ & $p=3$ & $p=4$ & $p=5$ & $p=6$ \\ 
\hline                                                                       
16 & 14 / 0.0125 & 15 / 0.0147 & 17 / 0.0180 & 17 / 0.0203 & 18 / 0.0240 \\    
\hline                                                                       
32 & 14 / 0.0310 & 15 / 0.0365 & 16 / 0.0424 & 17 / 0.0513 & 17 / 0.0617 \\   
\hline                                                                       
64 & 14 / 0.0995 & 14 / 0.114\z & 16 / 0.145\z & 16 / 0.170\z & 16 / 0.196\z \\       
\hline                                                                       
128 & 14 / 0.380\z & 14 / 0.421\z & 15 / 0.496\z & 16 / 0.606\z & 16 / 0.704\z \\ 
\hline       
\end{tabular} 
\caption{Iterations and time spent by PCG for multipatch disc.}
\label{table:Disc_mp_its} 
\end{table}


\begin{table}
\centering                                  
\begin{tabular}{|c|c|c|c|c|c|}                   
\hline                                           
$n_{\mathrm{sub}}$ & $p=2$ & $p=3$ & $p=4$ & $p=5$ & $p=6$ \\ 
\hline                                                              
16 & 12 / 0.205 & 13 / \z0.436 & 13 / \z0.844 & 14 / 1.96 & 15 / \z3.76 \\
\hline                                                              
32 & 10 / 1.11\z & 10 / \z2.04\z & 12 / \z4.50\z & 12 / 9.02 & 12 / 16.0\z \\  
\hline
64 & \z9 / 7.70\z & \z9 / 13.2\z\z & 10 / 27.7\z\z & * & * \\
\hline                                                   
\end{tabular}
\caption{Iterations and time spent by PCG for multipatch fan. In the cases denoted by ``*'', we were not able to assemble the mass matrix due to memory limitations.}
\label{table:Fan_mp_its} 
\end{table}

\begin{figure}
	\begin{subfigure}{0.49\textwidth}
		\includegraphics[width=\textwidth]{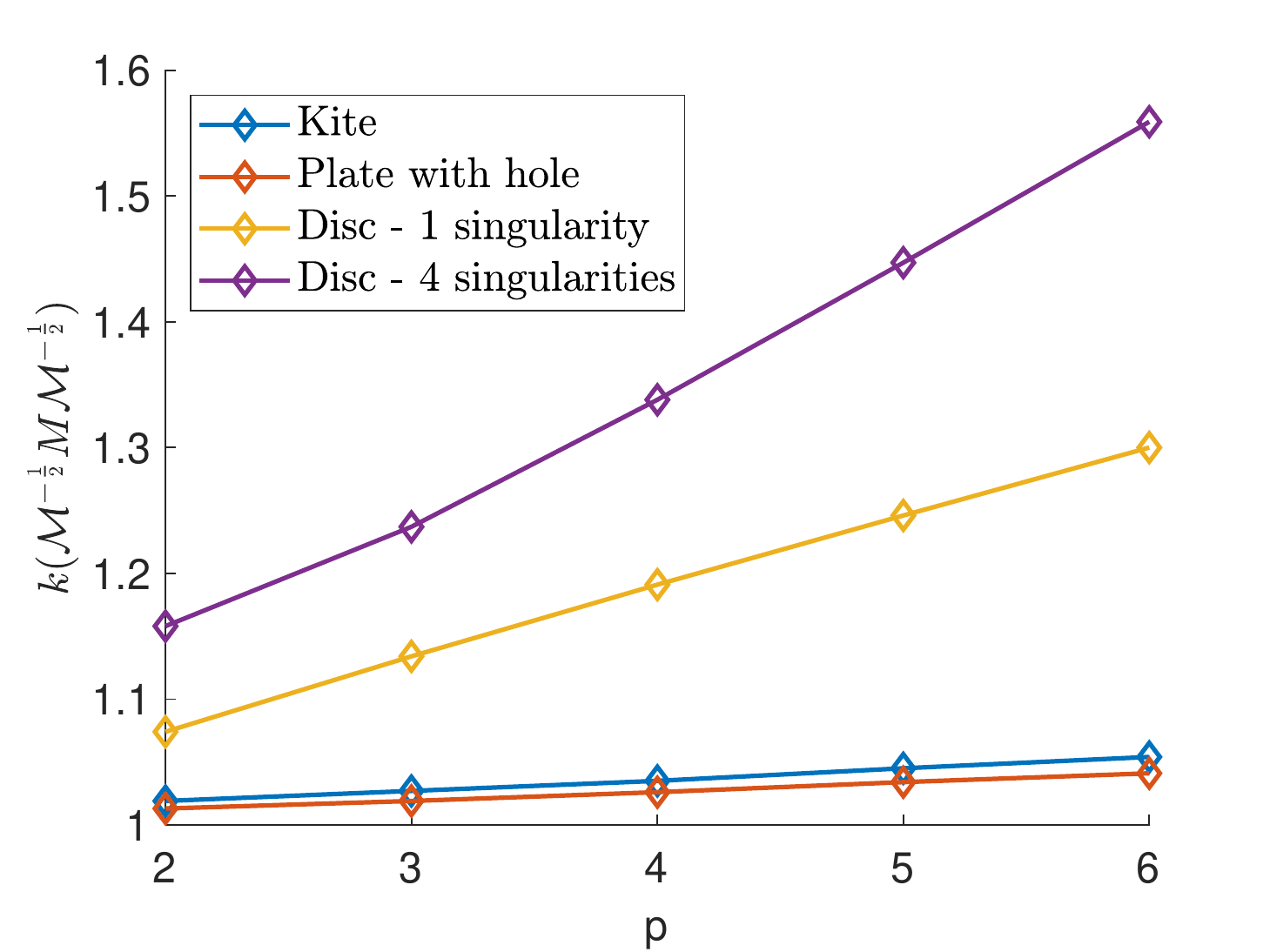}
		\caption{Single patch domains}
		\label{fig:Plot_sp}
	\end{subfigure}
	\begin{subfigure}{0.49\textwidth}
		\includegraphics[width=\textwidth]{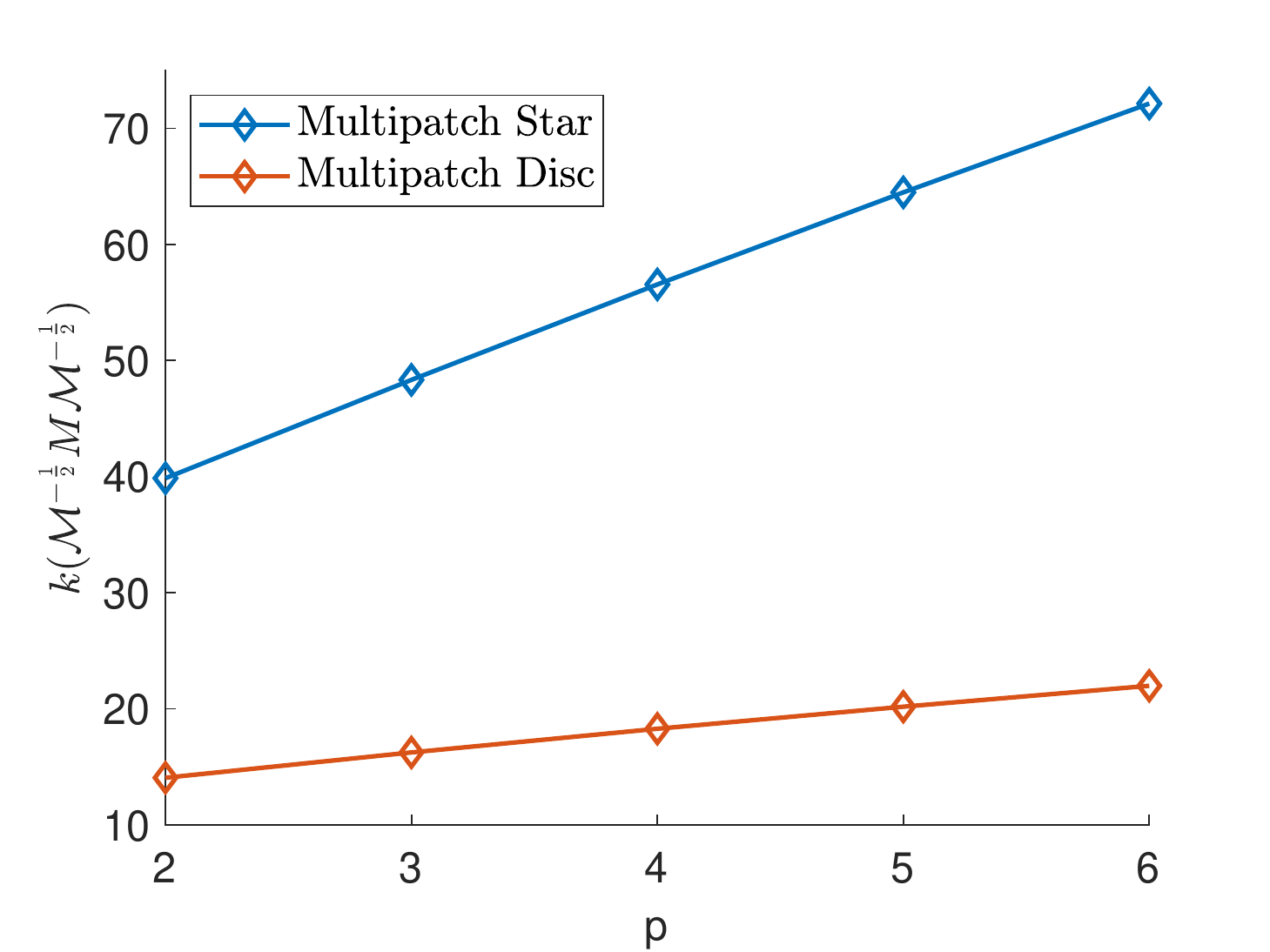}
		\caption{Multipatch domains}
		\label{fig:Plot_mp}
	\end{subfigure}
	\caption{Condition number of preconditioned mass matrix ($n_{\mathrm{sub}}=64$).}
	\label{fig:Plot_p}
\end{figure}


\section{Conclusions}\label{sec:conclusion}
In this work, we have presented a simple and  efficient preconditioner
for mass matrices arising in isogeometric analysis. The main idea for
the single patch case is to exploit the Kronecker product structure of
parametric mass matrix on the reference domain, combined with a
diagonal scaling to correctly  incorporate the effect of the geometry
parametrization. In order to deal with 
multipatch domains, we have used the single patch strategy in an
Additive Schwarz preconditioner.  The preconditioner  has an application
cost  of $O(p\Ndof)$ FLOPs, and  is  well suited for
parallelization. We have proved  that the single-patch
preconditioner converges, as  the
mesh-size $h$ goes to $0$, to the exact mass, and that robustness  with respect to
$h$ is preserved in the multipatch case.  Numerical tests reflect
the theoretical results and
show a very good behaviour also with respect to the spline
degree $p$.

\section*{Acknowledgements}
The authors were partially supported by the European Research Council
through the FP7 Ideas Consolidator Grant HIGEOM n.616563,  and
by the Italian Ministry of Education, University and Research (MIUR)
through the  ``Dipartimenti di Eccellenza Program (2018-2022) -
Dept. of Mathematics, University of Pavia''.  This support are
gratefully acknowledged. The authors
are members of the Gruppo Nazionale Calcolo Scientifico-Istituto
Nazionale di Alta Matematica (GNCS-INDAM).

\bibliography{biblio_mass}

\end{document}